\begin{document}
\title{High-girth near-Ramanujan graphs with lossy vertex expansion}
\author{Theo McKenzie\thanks{\texttt{mckenzie@math.berkeley.edu}. This material is based upon work supported by the National Science Foundation Graduate Research Fellowship Program under Grant No. DGE-1752814. Any opinions, findings, and conclusions or recommendations expressed in this material are those of the authors and do not necessarily reflect the views of the National Science Foundation.} \\ University of California, Berkeley\and Sidhanth Mohanty\thanks{\texttt{sidhanthm@cs.berkeley.edu}. Supported by NSF grant CCF-1718695.}\\ University of California, Berkeley}

\maketitle

\begin{abstract}
    Kahale \cite{Kah} proved that linear sized sets in $d$-regular Ramanujan graphs have vertex expansion at least $\frac{d}{2}$ and complemented this with construction of near-Ramanujan graphs with vertex expansion no better than $\frac{d}{2}$.  However, the construction of Kahale encounters highly local obstructions to better vertex expansion.  In particular, the poorly expanding sets are associated with short cycles in the graph.  Thus, it is natural to ask whether the vertex expansion of \emph{high-girth} Ramanujan graphs breaks past the $\frac{d}{2}$ bound.  Our results are two-fold:
    \begin{enumerate}
        \item For every $d = p+1$ for prime $p$ and infinitely many $n$, we exhibit an $n$-vertex $d$-regular graph with girth $\Omega(\log_{d-1} n)$ and vertex expansion of sublinear sized sets bounded by $\frac{d+1}{2}$ whose nontrivial eigenvalues are bounded in magnitude by $2\sqrt{d-1}+O\left(\frac{1}{\log n}\right)$.
        \item In any Ramanujan graph with girth $C\log n$, all sets of size bounded by $n^{0.99C/4}$ have near-lossless vertex expansion $(1-o_d(1))d$.
    \end{enumerate}
    The tools in analyzing our construction include the nonbacktracking operator of an infinite graph, the Ihara--Bass formula, a trace moment method inspired by Bordenave's proof of Friedman's theorem \cite{Bor}, and a method of Kahale \cite{Kah} to study dispersion of eigenvalues of perturbed graphs.
\end{abstract}

\setcounter{page}{0}
\thispagestyle{empty}
\newpage

\section{Introduction}
This paper is concerned with expander graphs, which are ubiquitous in theoretical computer science.  A natural and highly well-studied quantity associated with a $d$-regular graph is its \emph{edge expansion} defined as
\[
    \min_{|S|\leq \epsilon n} E(S,\overline S)/|S|,
\]
namely the minimum ratio of edges leaving a set $S$ to the size of $S$ for all $S$ of appropriately bounded size.  While edge expansion is known to be intractable to compute, there are explicit constructions of good edge expanders, and it is closely related to the second largest magnitude eigenvalue of its adjacency matrix, also known as \emph{spectral expansion} of a graph, via the expander mixing lemma and Cheeger's inequality \cite{Che1}. Spectral expansion is easily computable.  In particular, an application of the expander mixing lemma proves that small enough sets in graphs with spectral expansion $o(d)$ have near-optimal edge expansion of $(1-o_d(1))d$.

A natural analog to edge expansion is \emph{vertex expansion}, defined as 
\[
    \min_{|S|\leq \epsilon n} |\Gamma(S)|/|S|
\]
for some constant $\epsilon$, where $\Gamma(S)$ is the neighborhood of the set $S$ (potentially containing vertices of $S$). However, as difficult as edge expansion is to ascertain, vertex expansion has proven far more challenging.

As witnessed by balls around a vertex, we cannot hope for vertex expansion greater than $d-1$. Therefore we call a graph a \textit{lossless vertex expander} if for every $\delta$, there exists an $\epsilon$ such that there is vertex expansion $d-1-\delta$ for sets of size $\epsilon n$.  Lossless vertex expanders exist since a random $d$-regular graph is one with high probability (see \cite[Theorem 4.16]{HLW} for a proof).  However no deterministic construction of such graphs is known.  In an effort to understand lossless vertex expansion better and give explicit constructions, a natural question to ask is:
\begin{displayquote}
    \emph{What properties of random graphs leads to lossless vertex expansion?}
\end{displayquote}
Since a random $d$-regular graph is near-Ramanujan with high probability \cite{Fri}, and since near-Ramanujan graphs have near-optimal edge expansion, it is natural to inquire if spectral expansion has any implications for vertex expansion as well.  Kahale \cite{Kah} showed that the spectral expansion gives a bound on the vertex expansion. Specifically, Ramanujan graphs (namely graphs with optimal spectral expansion) have vertex expansion at least $d/2$.  While this is a nontrivial implication, it falls short of achieving the coveted \emph{losslessness} property.  Kahale also proved that the bound of $d/2$ is tight.  In particular, he exhibited an infinite family of near-Ramanujan graphs with vertex expansion $d/2$, which means spectral expansion alone is not sufficient for lossless vertex expansion.

The occurrence of a copy of $K_{2,d}$\footnote{complete bipartite graph with $2$ vertices on one side and $d$ vertices on the other} as a subgraph is the obstruction to lossless vertex expansion in Kahale's example.  Kahale's example deviates from a random graph in that it is highly unlikely for a random graph to contain a copy of $K_{2,d}$ as a subgraph.  More generally, random graphs have the property that with high probability any two ``short'' cycles are far apart, which Kahale's example doesn't satisfy.  Thus, it is natural to ask if the ``near-Ramanujan'' property in conjunction with the ``separatedness of cycles'' property of random graphs break past the $d/2$ barrier of Kahale.  The ``separatedness of cycles'' property is especially interesting to consider since it is a key property of random graphs exploited in proofs of Alon's conjecture \cite{Fri,Bor}.  A concrete question we can ask is:
\begin{displayquote}
    \emph{Do Ramanujan graphs with $\Omega(\log_{d-1} n)$ girth have lossless vertex expansion?}
\end{displayquote}
An affirmative answer to the above question would prove that the Ramanujan graphs of Lubotzky, Phillips, and Sarnak \cite{LPS} are lossless vertex expanders.  Towards answering the above question, we prove the following negative result:
\begin{theorem} \label{thm:main-negative}
    For every $d = p+1$ for prime $p$, there is an infinite family of $d$-regular graphs $G$ on $n$ vertices of girth $\ge\left(\frac{2}{3}-o_n(1)\right)\log_{d-1} n$ where there is a set of vertices $U$ such that $|\Gamma(U)|\le(d+1)|U|/2$, $|U|\le n^{1/3}$, and $\max\{\lambda_2(G),-\lambda_n(G)\} \le 2\sqrt{d-1}+O(1/{\log_{d-1} n})$.
\end{theorem}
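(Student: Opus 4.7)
The plan is to take an explicit Ramanujan base graph, locally rewire it to embed a carefully chosen gadget with poor vertex expansion, and then use the nonbacktracking operator together with a trace moment calculation to certify that the rewiring barely disturbs the spectrum.

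\textbf{Construction.} Start from $G_0$, an LPS Ramanujan graph of degree $d = p+1$ on $n$ vertices. It has girth $g_0 \geq \tfrac{4}{3}\log_{d-1}n(1-o(1))$ and nontrivial spectrum in $[-2\sqrt{d-1}, 2\sqrt{d-1}]$. Fix a radius $r \approx \tfrac13\log_{d-1}n$ so that any ball of radius $r$ in $G_0$ is a tree of size $O(n^{1/3})$. Perform a local surgery: excise a ball $B_r(v_0)$ from $G_0$ and graft in a gadget $H$ that has the same boundary profile (so $d$-regularity is preserved after reattachment), has girth at least $2r$, and contains a designated set $U$ with $|U| \le n^{1/3}$ and $|\Gamma_H(U)| \le \tfrac{d+1}{2}|U|$. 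One natural realization of $H$ is a quotient of a doubled $r$-ball: take two disjoint copies of the radius-$r$ tree and identify their boundaries through a bipartite matching chosen so that the induced subgraph on $U$ has average internal degree $\tfrac{d+1}{2}$ and the identified boundary vertices play the role of the external $H$-neighbors of $U$. An elementary edge count, combined with the girth hypothesis (which ensures no external vertex is double-counted), yields the expansion ratio $\tfrac{d+1}{2}$ and the size bound $|U| \leq n^{1/3}$. The new girth bound follows because any new cycle in $G$ must traverse at least one matching edge plus paths of length $\geq r$ through each tree.

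\textbf{Spectrum.} This is where the heavy lifting happens. Use the Ihara--Bass formula to pass from the adjacency matrix of $G$ to the nonbacktracking matrix $B = B_G$: near-Ramanujan for $A_G$ with error $O(1/\log n)$ is equivalent to the nontrivial spectrum of $B$ being contained in the disk of radius $\sqrt{d-1} + O(1/\log n)$, since the quadratic $\lambda^2 - \mu\lambda + (d-1) = 0$ relating them is Lipschitz outside its degenerate locus. Then bound the spectrum of $B$ by a Bordenave-style trace moment argument: for $k = \Theta(\log n)$, expand $\mathrm{tr}((BB^{*})^k)$ over closed tangle-free nonbacktracking walks. The high girth of $G$ ensures that most such walks unfold into walks on the universal cover (a $d$-regular tree), and hence contribute at most $(d-1)^k(1 + o(1))$; walks that enter $H$ are bounded separately by a combinatorial count exploiting $|V(H)| = O(n^{1/3})$. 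Finally, Kahale's dispersion method converts the trace moment bound into a pointwise eigenvalue bound with the required $O(1/\log n)$ slack.

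\textbf{Main obstacle.} The hardest step will be the trace-moment bookkeeping for walks that traverse the gadget $H$: because $H$ contains non-tree cycles (which are needed for the low expansion), walks entering $H$ can pick up extra contributions beyond the tree-walk bound. Bordenave's tangle-free and tangle-breaking decomposition, applied with $H$ playing the role of the ``tangle,'' should handle this at the cost of the $O(1/\log n)$ factor. A secondary subtlety lies in verifying that Kahale's dispersion bound remains sharp enough after translation through Ihara--Bass; I expect this reduces to checking that any nonbacktracking eigenvector associated to a spectrum-shifted eigenvalue must concentrate on the small gadget $H$, a step which can be extracted from Kahale's original argument. The remaining work is routine girth and edge-counting bookkeeping to verify the claimed parameters.
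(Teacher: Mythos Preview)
Your proposal has a genuine gap in the spectral step. You plan to bound the nontrivial spectrum of the \emph{final} graph $G$ directly via a trace moment on $B_G$, taking $k=\Theta(\log n)$ and arguing that $\mathrm{tr}((B_GB_G^{*})^k)\le C(d-1)^k$ up to factors coming from the gadget. But even before you touch the gadget, this cannot work at the precision you need: for a $d$-regular Ramanujan graph on $n$ vertices, the nonbacktracking spectrum lies on the circle $|\lambda|=\sqrt{d-1}$, so $\mathrm{tr}((BB^{*})^k)\approx nd\,(d-1)^k$. Taking the $2k$-th root yields $(nd)^{1/(2k)}\sqrt{d-1}$, and for $k=c\log n$ the prefactor $(nd)^{1/(2k)}$ is a constant strictly greater than $1$, not $1+O(1/\log n)$. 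To kill the $n^{1/(2k)}$ loss you would need $k\gg(\log n)^2$, but then walks of that length in a graph of girth $\Theta(\log n)$ are far from tangle-free and the walk count blows up. In short, a single trace moment on the finite graph cannot separate ``all eigenvalues at $2\sqrt{d-1}$'' from ``one eigenvalue at $2\sqrt{d-1}+\Theta(1/\log n)$''.

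The paper avoids this by splitting the analysis. The trace moment is applied not to $B_G$ but to the nonbacktracking operator of an \emph{infinite} $d$-regular graph $X$ obtained by hanging infinite trees off the gadget $H$; there one takes $k,\ell\to\infty$ and obtains the \emph{exact} bound $\rho(A_X)\le 2\sqrt{d-1}$, hence $\rho(A_{H'})\le 2\sqrt{d-1}$ for the finite gadget $H'\subset X$. The finite graph $G'$ is then analyzed by decomposing the Rayleigh quotient of a putative bad eigenvector $g$ as $g^*A_{G'}g = g_G^*A_Gg_G + g_{H'}^*A_{H'}g_{H'} + (\text{interface terms})$; the first two pieces are each at most $2\sqrt{d-1}\|g\|^2$ by the Ramanujan property of $G$ and the bound on $\rho(A_{H'})$, and Kahale's dispersion lemma is used to show that any eigenvector with eigenvalue exceeding $2\sqrt{d-1}$ must spread its $\ell_2$-mass over $\Theta(\log n)$ layers around the interface, forcing the interface terms to be $O(\|g\|^2/\log n)$. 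So Kahale's lemma is not a device for post-processing a trace bound; it is what replaces the trace bound on the finite graph entirely. Your gadget description (a doubled ball with boundary matching) is also too vague to read off $\Psi(U)=(d+1)/2$; the paper's gadget is a $(2,d-1)$-biregular bipartite graph with $U$ the degree-$(d-1)$ side, where $|\Gamma(U)|=|V|+|Q|=\tfrac{(d-1)}{2}|U|+|U|$ falls out of a one-line count.
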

We also complement the above with a positive result which can be summarized as ``small enough sets in Ramanujan graphs expand nearly losslessly'':
\begin{theorem} \label{thm:main-positive}
    Let $G$ be a $d$-regular Ramanujan graph with girth $C\log_{d-1}n$, then every set of $S$ of size $\le n^{\kappa}$ for $\kappa < \frac{C}{4}$ has vertex expansion $(1-o_d(1))d$.
\end{theorem}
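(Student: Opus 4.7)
The plan is to decompose the count of edges leaving $S$ and show the ``wasted'' contribution is a lower-order term. Writing $d_S(v) := |N(v) \cap S|$, we have
\[
|\Gamma(S) \setminus S| \;=\; d|S| - 2\,e(S,S) - E, \qquad E \;:=\; \sum_{v \notin S}(d_S(v) - 1)_+,
\]
so it suffices to show $e(S,S) + E = o_d(d|S|)$, which would yield $|\Gamma(S)| \ge (1-o_d(1))d|S|$.

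First I would bound $e(S,S)$ using the Moore bound. Since $|S| \le n^\kappa < (d-1)^{g/4}$ (where $g = C\log_{d-1}n$) and $G[S]$ inherits girth $\ge g$, iteratively peeling minimum-degree vertices shows $G[S]$ is $O(\sqrt{d-1})$-degenerate; hence $e(S,S) \le O(\sqrt{d})\,|S| = o_d(d|S|)$.

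The crux is bounding $E$. For girth $\ge 5$, a direct count shows $E \le \tfrac{1}{2}(\|A\mathbf{1}_S\|^2 - d|S|)$, and the naive Ramanujan bound $\|A\mathbf{1}_S\|^2 \le 4(d-1)|S| + O(|S|^2/n)$ only yields $E = O(d|S|)$ --- exactly Kahale's $d/4$ barrier. To go past it, I would combine the girth-enforced local tree structure with the Ramanujan property via a trace-moment argument in the spirit of Bordenave. For $2k < g$, the walk count $\mathbf{1}_S^\top A^{2k} \mathbf{1}_S$ has an exact expression as a weighted sum over pairs of $S$-vertices of tree-walk counts indexed by pairwise distance, while Ramanujan (equivalently the $\sqrt{d-1}$ bound on the non-trivial non-backtracking spectrum via Ihara--Bass) bounds it by $(2\sqrt{d-1})^{2k}|S| + O(d^{2k}|S|^2/n)$. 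For $k$ close to $g/4$, the diagonal tree-walk term nearly saturates the Ramanujan bound (by Kesten--McKay), squeezing the off-diagonal contribution --- and hence the pairwise-distance profile of $S$ --- to be small, delivering $E = o_d(d|S|)$.

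The main obstacle is precisely this last step: a single moment inequality merely reproduces Kahale's bound, and one must play the inequalities at $k = 1, \dots, \Theta(g/4)$ off each other, using the critical constraint $|S|^2 < (d-1)^{g/2}$ (which is exactly the hypothesis $\kappa < C/4$) to guarantee that the off-diagonal pieces do not overwhelm the diagonal at the relevant scale. Once $E = o_d(d|S|)$ is in hand, combining with the Moore-bound control on $e(S,S)$ gives $|\Gamma(S)\setminus S| \ge d|S|(1 - o_d(1))$, and hence vertex expansion $(1-o_d(1))d$ as claimed.
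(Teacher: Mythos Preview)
Your decomposition $|\Gamma(S)\setminus S| = d|S| - 2e(S,S) - E$ with $E = \sum_{v\notin S}(d_S(v)-1)_+$ is exactly the right starting point and coincides with the paper's setup. Your Moore-bound control of $e(S,S)$ via $G[S]$ is also fine (the paper uses the expander mixing lemma here instead, giving the same $O(\sqrt{d})|S|$ bound).

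The gap is in bounding $E$. Your proposed trace-moment route is not actually carried out: you correctly observe that the single moment $\|A\mathbf 1_S\|^2$ only recovers Kahale's barrier, and then gesture at ``playing off'' the inequalities $\mathbf 1_S^\top A^{2k}\mathbf 1_S \le (2\sqrt{d-1})^{2k}|S| + O(d^{2k}|S|^2/n)$ for $k$ up to $\Theta(g/4)$ against each other, but you give no mechanism for extracting control of $E$ from these. The difficulty is real: by Kesten--McKay the diagonal (tree-walk) contribution to $\mathbf 1_S^\top A^{2k}\mathbf 1_S$ is already of order $(2\sqrt{d-1})^{2k}|S|$ up to a polynomial-in-$k$ factor, so the Ramanujan bound leaves essentially no usable slack to pin down the off-diagonal part, let alone the specific combinatorial quantity $E$. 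As written this is a genuine hole, and you acknowledge as much.

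The paper bypasses spectral moments entirely for $E$. It builds an auxiliary graph $H(S)$ on vertex set $S$: take the induced edges of $G[S]$, and for each $v\notin S$ with $d_S(v)=i\ge 2$ add an arbitrary spanning tree (so $i-1$ edges) on the $i$ neighbors of $v$ in $S$. Then $|E(H(S))| = e(S,S) + E$ exactly, and $g(H(S)) \ge g(G)/2$ since every $H(S)$-edge corresponds to a path of length at most $2$ in $G$. Now apply the \emph{irregular Moore bound} to $H(S)$: girth at least $(C/2)\log_{d-1}n$ on at most $n^{\kappa}$ vertices forces average degree at most $1+(d-1)^{4\kappa/C}$, which is $o_d(d)$ precisely when $\kappa < C/4$. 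This gives $e(S,S)+E = o_d(d)|S|$ in one stroke. The threshold $\kappa < C/4$ that you correctly flagged as critical enters here, in the Moore bound on $H(S)$, not through any spectral inequality.
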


\subsection{Technical overview}
We give a brief description of how \pref{thm:main-negative} and \pref{thm:main-positive} are proved.
\paragraph{Overview of proof of \pref{thm:main-negative}.}  Our proof is inspired by that of Kahale's.  At a high level, Kahale embeds a copy of $K_{2,d}$ within a Ramanujan graph.  We proceed similarly to Kahale, but instead of embedding a $K_{2,d}$, we embed a single subgraph $H$ that is high girth but a lossy vertex expander and show that if $H$ has size $n^\alpha$ for some $0 < \alpha\leq 1/3$, the overall graph is still near-Ramanujan.

Our proof involves two steps: the first step is in proving that the subgraph $H$ being embedded has spectral radius bounded by $2\sqrt{d-1}$, and the second step is in proving that planting $H$ within a Ramanujan graph results in a near-Ramanujan graph.  For the first step, we describe an infinite graph containing $H$ and bound its spectral radius via a trace moment method.  The trace moment method involves bounding the number of closed walks satisfying certain properties within a graph, and is inspired by an encoding argument from Bordenave's proof of Friedman's theorem \cite{Bor}.

The second step is in proving that our method of embedding a copy of $H$ within a Ramanujan graph does not perturb the eigenvalues by a large amount. Towards doing so, we use the fact that the spectral radius of $H$ is bounded by $2\sqrt{d-1}$ in conjunction with Kahale's argument about dispersion of eigenvalues in high-girth graphs.

\paragraph{Overview of proof of \pref{thm:main-positive}.}  We first prove that if a set $S$ in a Ramanujan graph has ``lossy'' vertex expansion, then we can construct a graph $H$ on vertex set $S$ such that (i) the girth of $H$ is at least half the girth of $G$, and (ii) the average degree of $H$ is ``high'' (in particular, the worse the vertex expansion of $S$, the higher the average degree of $H$).  We then employ the irregular Moore bound, which gives a quantitative tradeoff between the average degree of a graph and its girth.  In particular, this would imply that a Ramanujan graph with ``lossy'' vertex expansion necessarily must have ``low'' girth.


\subsection{Related work}
\paragraph{Applications of vertex expanders.}
There are many applications of expander graphs where having vertex expansion is particularly useful. For example, lossless expanders are particularly of interest in the field of error correcting codes \cite{LMSS,SipSpiel,Spiel}. Lossless vertex expanders give linear error correcting codes that are decodable in linear time \cite{SipSpiel}. Guruswami, Lee and Razborov \cite{GLR} use bipartite vertex expanders to construct large subspaces of $\R^n$ where all vectors $x$ in the subspace satisfy $(\log n)^{-O(\log \log \log n)}||x||_2\leq ||x||_1\leq \sqrt n||x_2||$.

\paragraph{Explicit constructions.}  Constructions of Ramanujan graphs of \cite{LPS,Mar88,Mor} of all degrees that are of the form $p^r+1$ for $p$ prime, as well as the construction of near-Ramanujan graphs of every degree of \cite{MOP} have vertex expansion $\sim\frac{d}{2}$ just by virtue of being Ramanujan via Kahale's result.  In fact no deterministic construction has improved upon the $d/2$ bound obtained from solely spectral information.    In a remarkable work, Capalbo et. al. \cite{CRVW} exhibited an explicit construction of a bipartite graph where subsets of one side of the bipartite graph expand losslessly to the other, using a zig-zag product so the the losslessness of a small, random-like graph boosts the expansion from a large, potentially lossy vertex expanding graph.

\paragraph{Quantum Ergodicity.}  Quantum ergodicity is another area where both local and global properties of random-like graphs are used.  In particular, Anantharaman and Le Masson \cite{AL} proved that graphs that have few short cycles (and are therefore close to high girth) and spectral expansion are quantum ergodic, which in this context means the eigenvectors are equidistributed across vertices. Anantharaman, as well as Brooks, Le Masson, and Lindenstrauss exhibited alternative proofs \cite{Anth, BLL}. The proof from \cite{BLL} shows that quantum ergodicity is equivalent to the mixing of a certain graphical operator. They then use high girth to show that this is equivalent to showing mixing on the infinite tree, then expansion to show the nonbacktracking operator mixes on the tree.

\paragraph{Eigenvector delocalization.} Ganguly and Srivastava, and later Alon, Ganguly and Srivastava \cite{GS,AGS} give a perturbation of the LPS graph similar to Kahale's argument, but instead of individual vertices, two trees are added and connected to the graph. By assuming the tree is sufficiently deep and carefully connecting the tree to the rest of the graph, the authors create a graph that is high girth but contains eigenvectors that are localized. These graphs are also lossy vertex expanders. However, they show that these graphs cannot be Ramanujan, but rather have spectral radius at least $(2+c)\sqrt{d-1}$ where $c>0$ is a constant. Alon \cite{Alon} used eigenvector delocalization to create near-Ramanujan expanders of every degree by perturbing known constructions of Ramanujan or near-Ramanujan graphs.  Paredes \cite{PP} used similar techniques to remove short cycles in a graph while preserving expansion and uses this to algorithmically create graphs that are near-Ramanujan and also have girth at least $\Omega(\sqrt{\log n})$.

\paragraph{Complexity of constraint satisfaction problems.}  Proofs that it is hard for even linear degree Sum-of-Squares to refute random 3XOR and 3SAT instances on $n$ variables \cite{Gri,Sch} rely on lossless vertex expansion of some sets in a graph underlying a random instance, which suggests a connection between deterministic algorithms for constructing lossless vertex expanders and algorithms for explicit hard instances for Sum-of-Squares.

\section{Preliminaries}
\subsection{Elementary graph theory}
\begin{definition}
The \textit{girth} $g(G)$ of a graph $G$ is the length of the smallest cycle in $G$.
\end{definition}

\begin{definition}
For $G=(V,E)$, the \textit{valency} of $a\in V$ to $B\subset V$ is $|\Gamma(a)\cap B|$, where $\Gamma(S)$ for $S\subset V$ is the set of neighbors of $S$ in $G$.
\end{definition}

\begin{definition}
    The \textit{ball} of radius $h$ around a set $U\subset V$, denoted $\textnormal{Ball}_h(U)$, is the set of vertices of distance at most $h$ from $U$.
\end{definition}

\begin{definition}
The \textit{vertex expansion} of a set $U\subset V$ is
\[
\Psi(U):=\frac{|\Gamma(U)|}{|U|}.
\]
Similarly, the $\epsilon$-vertex expansion of a graph $G$ is:
\[
\Psi_{\epsilon}(G)=\min_{|U|\leq \epsilon|V|}\Psi(U)
\]
where $U$ ranges over subsets of $V$, and $\epsilon$ is an arbitrary constant.
\end{definition}

\begin{definition}
    Given a graph $G$, we use $A_G$ to denote its adjacency matrix.  When $G$ is a finite graph on $n$ vertices, the eigenvalues of $A_G$ can be ordered as $\lambda_1(G)\ge\lambda_2(G)\ge\dots\ge\lambda_n(G)$.
\end{definition}

\begin{definition}
    We use $B_G$ to denote the \emph{nonbacktracking matrix} of a graph $G$ which is a matrix with rows and columns indexed by directed edges of $G$ defined as follows:
    \[
        B[(u,v),(w,x)] =
        \begin{cases}
            1   &\text{if $v = w$ and $u \ne x$}\\
            0   &\text{otherwise.}
        \end{cases}
    \]
\end{definition}

\begin{definition}
    The \emph{spectral expansion} of a finite graph $G$, denoted $\lambda(G)$ is defined as $\max\{\lambda_2(G), -\lambda_n(G)\}$, which can equivalently be described as the ``second largest absolute eigenvalue''.
\end{definition}
We now state the following standard fact known as the \emph{expander mixing lemma} (see \cite[Lemma 2.5]{HLW}).
\begin{lemma}[Expander Mixing Lemma]    \label{lem:expander-mixing-lemma}
    Let $G$ be a $d$-regular graph on $n$ vertices.  For any two subsets of vertices, $S,T\subseteq V(G)$, let $e(S,T)$ be the number of pairs of vertices $(x,y)$ such that $x\in S, y\in T$ and $\{x,y\}$ is an edge in $G$.  Then:
    \[
        \left|e(S,T) - \frac{d}{n}|S|\cdot|T|\right| \le \lambda(G)\sqrt{|S|\cdot|T|}.
    \]
\end{lemma}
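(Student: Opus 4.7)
The plan is to recognize $e(S,T)$ as the bilinear form $\mathbf{1}_S^\top A_G \mathbf{1}_T$, where $\mathbf{1}_S, \mathbf{1}_T \in \mathbb{R}^n$ are the indicator vectors of $S$ and $T$, and then exploit the spectral decomposition of $A_G$. Because $G$ is $d$-regular, the all-ones vector $\mathbf{1}$ is an eigenvector of $A_G$ with eigenvalue $\lambda_1(G) = d$, and $A_G$ restricted to $\mathbf{1}^\perp$ is a symmetric operator whose spectrum is exactly $\{\lambda_2(G),\dots,\lambda_n(G)\}$, so its operator norm on $\mathbf{1}^\perp$ equals $\lambda(G)$.

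The first step is to decompose $\mathbf{1}_S = \frac{|S|}{n}\mathbf{1} + u_S$ with $u_S \perp \mathbf{1}$, and analogously $\mathbf{1}_T = \frac{|T|}{n}\mathbf{1} + u_T$. Expanding the quadratic form and using $A_G \mathbf{1} = d\mathbf{1}$ together with $u_S^\top \mathbf{1} = u_T^\top \mathbf{1} = 0$, the cross terms vanish and one obtains
\[
    e(S,T) \;=\; \mathbf{1}_S^\top A_G \mathbf{1}_T \;=\; \frac{d}{n}|S|\,|T| + u_S^\top A_G u_T.
\]
This isolates the main term and reduces the entire problem to bounding the error term $u_S^\top A_G u_T$.

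For the error, since $u_S, u_T \in \mathbf{1}^\perp$, I would apply Cauchy--Schwarz together with the operator norm bound above to conclude $|u_S^\top A_G u_T| \le \lambda(G)\,\|u_S\|\,\|u_T\|$. Finally, by the Pythagorean theorem, $\|u_S\|^2 = \|\mathbf{1}_S\|^2 - \frac{|S|^2}{n} = |S| - \frac{|S|^2}{n} \le |S|$, and similarly $\|u_T\|^2 \le |T|$, which combines with the previous inequality to yield the claimed bound $\lambda(G)\sqrt{|S|\,|T|}$. There is no real obstacle here; the only bookkeeping point worth highlighting is the identification of the operator norm of $A_G$ on $\mathbf{1}^\perp$ with $\lambda(G)$, which is immediate from the spectral theorem for symmetric matrices applied to a $d$-regular graph.
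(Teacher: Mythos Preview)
Your argument is correct and is exactly the standard proof of the expander mixing lemma. Note, however, that the paper does not actually prove this statement: it is presented as a known fact with a reference to \cite[Lemma 2.5]{HLW}, so there is no in-paper proof to compare against. The argument you give is the one found in that reference.
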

And finally, we state the ``irregular Moore bound'' of \cite{AHL} which articulates a tradeoff between the average degree of a graph and its girth.
\begin{lemma}   \label{lem:irreg-moore-bound}
    Let $G$ be a $n$-vertex graph with average degree-$d$.
    Then
    \[
        g(G) \le 2\log_{d-1} n + 2.
    \]
\end{lemma}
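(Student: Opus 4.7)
\textbf{Proof plan for \pref{lem:irreg-moore-bound}.}  The plan is to lower bound $n$ by counting vertices in a ball of radius just below half the girth, which must be a tree, and then to use averaging/Jensen to replace local degree information by the average degree $d$.

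Set $r := \lfloor (g(G)-1)/2 \rfloor$, so $g(G) \ge 2r+1$ and consequently for every vertex $v$ the induced subgraph on $\textnormal{Ball}_r(v)$ is a tree: any chord would close a cycle of length at most $2r < g(G)$.  In particular, non-backtracking walks of length $\ell \le r$ starting at $v$ hit pairwise distinct vertices, so
\[
    |\textnormal{Ball}_r(v)| \;=\; 1 + \sum_{\ell=1}^{r} \#\mathrm{NBW}_\ell(v),
\]
where $\#\mathrm{NBW}_\ell(v)$ counts non-backtracking walks of length exactly $\ell$ rooted at $v$.

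The next step is to lower bound $\sum_v \#\mathrm{NBW}_\ell(v)$ by $n \cdot d(d-1)^{\ell-1}$.  The quantity $\#\mathrm{NBW}_\ell(v)$ factors as $d_v \prod_{i=1}^{\ell-1}(d_{u_i}-1)$ summed over walks, which depends on degrees along the walk rather than just on $d_v$.  The standard maneuver (following \cite{AHL}) is to reinterpret $\sum_v \#\mathrm{NBW}_\ell(v)$ as a sum over directed edges, unrolled so that each interior factor $(d_{u_i}-1)$ averages to $d-1$ when $u_i$ is picked uniformly at random in the appropriate sense, and to then invoke Jensen's inequality for the convex function $x \mapsto x(x-1)^{\ell-1}$ applied to the empirical degree distribution.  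Summing $\ell$ from $0$ to $r$ yields
\[
    \sum_v |\textnormal{Ball}_r(v)| \;\ge\; n\left(1 + d \sum_{\ell=0}^{r-1}(d-1)^\ell\right) \;\ge\; n \cdot d(d-1)^{r-1}.
\]
Combining this with the trivial upper bound $|\textnormal{Ball}_r(v)| \le n$ gives $n \ge d(d-1)^{r-1}$, so $r \le 1 + \log_{d-1}(n/d) \le \log_{d-1} n$, and since $g(G) \le 2r + 2$ by the choice of $r$, we conclude $g(G) \le 2\log_{d-1} n + 2$.

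The main obstacle is the Jensen/averaging step: the naive pointwise bound $\#\mathrm{NBW}_\ell(v) \ge d_v (d_v-1)^{\ell-1}$ is useless when $d_v$ is much smaller than $d$.  The fix — essentially the content of the Alon–Hoory–Linial argument — is to run a random non-backtracking walk whose stationary/initial distribution weights vertices or edges so that the expectation over $\ell$ steps telescopes into the average degree $d$; this is where convexity of $x(x-1)^{\ell-1}$ is used.  All remaining steps are routine bookkeeping.
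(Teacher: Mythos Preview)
The paper does not supply its own proof of this lemma; it is simply quoted as the irregular Moore bound of Alon--Hoory--Linial \cite{AHL} and used as a black box.  Your plan is the AHL argument and is sound in outline, with the arithmetic at the end (from $n \ge d(d-1)^{r-1}$ to $g(G) \le 2\log_{d-1} n + 2$) checking out.

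One quibble worth flagging: the Jensen step you first describe --- applying convexity of $x \mapsto x(x-1)^{\ell-1}$ to the empirical degree distribution --- does \emph{not} directly lower-bound $\sum_v \#\mathrm{NBW}_\ell(v)$, because the walk count depends on the degrees of all vertices along the path, not just the root.  You already recognize this in your final paragraph and correctly point to the actual AHL mechanism (a random non-backtracking walk started from a uniformly random directed edge, together with an AM--GM/log-convexity bound along the walk) as the fix.  So long as that is what you intend to carry out, the plan is correct and is exactly what the paper defers to \cite{AHL} for.
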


\subsection{Operator theory}
In this section, let $V$ be a countable set and $T:\ell_2(V)\to\ell_2(V)$ be a bounded linear operator.
\begin{definition}
    The \emph{spectrum} of $T$, which we denote $\mathsf{spec}(T)$, is the set of all $\lambda\in\C$ such that $\lambda\Id-T$ is not invertible.
\end{definition}

\begin{definition}
    The \emph{spectral radius} of $T$, which we denote $\rho(T)$ is defined as $\sup\{|\lambda|:\lambda\in\mathsf{spec}(T)\}$.
\end{definition}

\begin{fact}
    The \emph{operator norm} of $T$, which we write as $\|T\|$ is equal to $\sqrt{\rho(TT^*)}$ where $T^*$ is the adjoint of $T$.\footnote{Since $\ell_2(V)$ comes equipped with the inner product $\langle f, g\rangle \coloneqq \sum_{v\in V}f(v)g(v)$, $T^*$ is simple the ``transpose'' of $T$.}
\end{fact}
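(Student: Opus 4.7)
The plan is to derive the identity in two standard operator-theoretic steps and then combine them. Namely, I would first establish the C*-identity $\|T\|^2 = \|T^*T\| = \|TT^*\|$, and then show that for any self-adjoint bounded operator $S$ on $\ell_2(V)$ the spectral radius coincides with the operator norm: $\rho(S) = \|S\|$. Applying the second step to $S := TT^*$, which is self-adjoint since $(TT^*)^* = TT^*$, immediately gives $\|T\| = \sqrt{\|TT^*\|} = \sqrt{\rho(TT^*)}$.

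For the first step, I would compute, for any unit vector $x \in \ell_2(V)$,
$\|Tx\|^2 = \langle Tx, Tx \rangle = \langle T^*Tx, x \rangle \le \|T^*T\|$
by Cauchy--Schwarz, giving $\|T\|^2 \le \|T^*T\|$. In the reverse direction, submultiplicativity together with the standard identity $\|T^*\| = \|T\|$ yields $\|T^*T\| \le \|T\|^2$. Applying the resulting equality with $T^*$ in place of $T$ then gives $\|T\|^2 = \|T^*\|^2 = \|TT^*\|$.

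For the second step, the key tool is Gelfand's spectral radius formula $\rho(S) = \lim_{n \to \infty} \|S^n\|^{1/n}$, which is a consequence of analyticity of the resolvent outside the spectrum. For self-adjoint $S$, the first step gives $\|S^2\| = \|S^*S\| = \|S\|^2$, and iterating this produces $\|S^{2^k}\| = \|S\|^{2^k}$ for every $k \ge 0$. Hence the subsequence $\|S^{2^k}\|^{1/2^k}$ is constantly equal to $\|S\|$, and since Gelfand's formula guarantees convergence of the full sequence, its limit must also equal $\|S\|$, yielding $\rho(S) = \|S\|$.

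The only nontrivial ingredient imported from functional analysis is Gelfand's formula; everything else is pure inner-product manipulation. Accordingly, I do not anticipate a real obstacle --- the main ``work'' is just organizing the two standard observations above in the correct order and noting that $TT^*$ is self-adjoint so that the self-adjoint case of $\rho = \|\cdot\|$ applies.
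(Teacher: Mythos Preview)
Your argument is correct and is the standard derivation of this identity: the C*-identity $\|T\|^2 = \|TT^*\|$ followed by $\rho(S)=\|S\|$ for self-adjoint $S$ via Gelfand's formula applied along the subsequence $n=2^k$. There is nothing to compare against, since the paper states this as a preliminary \emph{Fact} without proof; your write-up simply supplies the routine justification that the paper omits.
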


\begin{fact}
    $\rho(T) = \lim_{\ell\to\infty} \|T^\ell\|^{1/\ell}$.
\end{fact}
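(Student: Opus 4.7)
The plan is to prove $\rho(T)\le\liminf_{\ell\to\infty}\|T^\ell\|^{1/\ell}$ and $\limsup_{\ell\to\infty}\|T^\ell\|^{1/\ell}\le\rho(T)$ separately; together they force the limit to exist and equal $\rho(T)$.

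For the lower-bound direction I would first record the elementary estimate $\rho(S)\le\|S\|$ for any bounded $S$: when $|\mu|>\|S\|$ the Neumann series $\sum_{k\ge 0}S^k/\mu^{k+1}$ converges in operator norm to $(\mu\,\Id-S)^{-1}$, so every such $\mu$ lies in the resolvent set. Then I would verify one half of the spectral mapping theorem, $\{\lambda^\ell:\lambda\in\mathsf{spec}(T)\}\subseteq\mathsf{spec}(T^\ell)$, via the algebraic factorization
\[
\lambda^\ell\Id-T^\ell=(\lambda\Id-T)\,q(T)=q(T)\,(\lambda\Id-T),\qquad q(x):=\sum_{j=0}^{\ell-1}\lambda^{\ell-1-j}x^j,
\]
so an inverse of $\lambda^\ell\Id-T^\ell$ would yield a two-sided inverse of $\lambda\Id-T$. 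Combining these gives $|\lambda|^\ell\le\rho(T^\ell)\le\|T^\ell\|$ for every $\lambda\in\mathsf{spec}(T)$, hence $\rho(T)\le\|T^\ell\|^{1/\ell}$ for every $\ell$.

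For the upper-bound direction I would work with the resolvent $R(\mu):=(\mu\Id-T)^{-1}$, viewed as a bounded-operator-valued function on $\C\setminus\mathsf{spec}(T)$. Holomorphy of $R$ there comes from yet another Neumann-series argument: if $\mu_0$ is in the resolvent set and $|\mu-\mu_0|$ is small, $R(\mu)$ can be expanded in a convergent power series in $(\mu-\mu_0)$ with operator coefficients. On the outer region $\{|\mu|>\|T\|\}$ we already have the explicit Laurent expansion $R(\mu)=\sum_{k\ge 0}T^k/\mu^{k+1}$, and the Cauchy formula (now for an operator-valued holomorphic function) gives
\[
T^k=\frac{1}{2\pi i}\oint_{|\mu|=r}\mu^k R(\mu)\,d\mu
\]
for any $r>\|T\|$. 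Because the integrand is holomorphic on the whole annulus $\{|\mu|>\rho(T)\}$, contour deformation lets me take any radius $r>\rho(T)$, which yields $\|T^k\|\le r^{k+1}\sup_{|\mu|=r}\|R(\mu)\|$. Taking $k$-th roots and sending $k\to\infty$ gives $\limsup_k\|T^k\|^{1/k}\le r$, and letting $r\downarrow\rho(T)$ closes the argument.

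The main obstacle is lifting the scalar complex-analytic machinery to the Banach-space-valued setting: holomorphy of $R(\mu)$ on the resolvent set, independence of the contour integral on the radius, and operator-norm bounds on $R(\mu)$ along fixed circles. All of these are standard once one sets up operator-valued Cauchy theory (it suffices to pair against vectors and apply the scalar results), and they are the only non-algebraic ingredient — everything else reduces to manipulations of Neumann series.
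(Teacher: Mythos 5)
The paper states this as a standing \emph{Fact} from operator theory (Gelfand's spectral radius formula) and gives no proof of its own, so there is nothing internal to compare against. Your argument is the standard textbook proof: the lower bound $\rho(T)\le\|T^\ell\|^{1/\ell}$ via Neumann series plus the polynomial factorization $\lambda^\ell\Id - T^\ell = (\lambda\Id - T)\,q(T)$, and the upper bound via holomorphy of the resolvent on $\{|\mu|>\rho(T)\}$, the Laurent expansion $R(\mu)=\sum_k T^k/\mu^{k+1}$, and contour deformation in the operator-valued Cauchy integral. Both halves are correct and complete; since the paper works on $\ell_2(V)$, all the Banach-algebra machinery you invoke is available without restriction. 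One small point you could make explicit: pairing with vectors (or functionals) as you mention at the end is exactly what justifies both the termwise differentiation underlying operator-valued holomorphy and the contour-deformation step, so it is worth flagging that weak holomorphy and norm holomorphy coincide here.
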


\begin{fact}[{Consequence of \cite[Theorem 6]{Que96}}]  \label{fact:specrad-max-basis}
    Suppose $T$ is a self-adjoint operator, and $\Phi$ is a basis of $\ell_2(V)$.  Then:
    \[
        \rho(T) = \sup_{\phi\in\Phi}\limsup_{k\to\infty}|\langle \phi, T^k\phi\rangle|^{1/k}.
    \]
\end{fact}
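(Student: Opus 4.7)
The plan is to split the equality into upper and lower bounds, handling the upper by Cauchy--Schwarz and the lower by the spectral theorem for bounded self-adjoint operators.

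For the upper bound, I would apply Cauchy--Schwarz to get $|\langle \phi, T^k\phi\rangle| \le \|T^k\|\,\|\phi\|^2$ for every $\phi\in\Phi$, then use self-adjointness to convert $\|T^k\| = \|T\|^k = \rho(T)^k$; taking $k$-th roots and letting $k\to\infty$, the factor $\|\phi\|^{2/k}$ tends to $1$, so $\limsup_k|\langle \phi, T^k\phi\rangle|^{1/k} \le \rho(T)$, and taking $\sup$ over $\phi$ preserves this.

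For the lower bound, I would invoke the spectral theorem to write $T = \int \lambda\, dE(\lambda)$ and associate to each $\phi\in\Phi$ the positive Borel measure $\mu_\phi(\cdot) := \langle \phi, E(\cdot)\phi\rangle$, so that $\langle \phi, T^k\phi\rangle = \int \lambda^k\, d\mu_\phi(\lambda)$. The crux is the following lemma: for every $\alpha < \rho(T)$, some $\phi\in\Phi$ satisfies $c_\phi := \mu_\phi(\{|\lambda| > \alpha\}) > 0$. To prove it, I would argue that since $\rho(T) > \alpha$ the spectrum exits $[-\alpha, \alpha]$, so the spectral projection $P := E(\{|\lambda| > \alpha\})$ is a nonzero bounded operator; but if $\|P\phi\|^2 = \mu_\phi(\{|\lambda| > \alpha\})$ vanished for \emph{every} $\phi\in\Phi$, then $P$ would annihilate a total subset of $\ell_2(V)$ and hence equal zero, a contradiction.

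Given such a $\phi$, I would restrict to even $k = 2j$ so $\lambda^{2j}\ge 0$ everywhere, yielding
\[
\langle \phi, T^{2j}\phi\rangle = \int \lambda^{2j}\, d\mu_\phi(\lambda) \;\ge\; \alpha^{2j}\cdot c_\phi,
\]
and hence $\limsup_k |\langle \phi, T^k\phi\rangle|^{1/k} \ge \alpha$; letting $\alpha \uparrow \rho(T)$ closes the argument. The only delicate step is the lemma in the previous paragraph, and the main thing to verify is that ``basis'' really does imply ``total set'' so that a nonzero projection cannot kill every element of $\Phi$; for any of the standard meanings of basis (Schauder or orthonormal, let alone Hamel), this is immediate, which is why the statement is labelled a ``consequence'' of the cited result rather than an independent theorem.
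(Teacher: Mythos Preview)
The paper does not prove this statement; it is stated without proof as a consequence of \cite[Theorem~6]{Que96}, so there is no argument in the paper to compare against.

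Your proposed proof is correct and self-contained. The upper bound via Cauchy--Schwarz and the identity $\|T^k\|=\|T\|^k=\rho(T)^k$ for self-adjoint $T$ is routine. For the lower bound your spectral-measure argument is sound: the key step is that the projection $P=E(\{|\lambda|>\alpha\})$ is nonzero whenever $\alpha<\rho(T)$ (since the spectrum of a self-adjoint operator equals the support of its spectral measure), and a nonzero bounded operator cannot annihilate a total set, so some $\phi\in\Phi$ has $c_\phi>0$; restricting to even powers then cleanly avoids cancellation. Your closing remark about the meaning of ``basis'' is well taken---totality is all that is used, and any reasonable reading of the hypothesis supplies it. The net effect is that you have supplied an elementary spectral-theorem proof where the paper simply defers to the literature.
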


\begin{fact}
    Let $A$ be any principal submatrix of $T$.  Then $\rho(A)\le\rho(T)$.
\end{fact}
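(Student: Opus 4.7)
The plan is to reduce the spectral radius inequality to an operator norm inequality. Let $U\subseteq V$ be the index set of the principal submatrix $A$ and let $P\colon\ell_2(U)\hookrightarrow\ell_2(V)$ be the isometric inclusion, so that $A=P^{*}TP$ with $\|P\|=\|P^{*}\|=1$. I will read the statement under the standing assumption inherited from the preceding Fact~\ref{fact:specrad-max-basis}, namely that $T$ is self-adjoint. This is not just a convenient hypothesis but a necessary one: the $2\times 2$ example $T=\bigl(\begin{smallmatrix}2 & 1 \\ -2 & 0\end{smallmatrix}\bigr)$ has $\rho(T)=\sqrt{2}$, while its $(1,1)$ principal block has spectral radius $2$. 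So any correct argument must invoke self-adjointness somewhere.

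Under self-adjointness, the argument is short. First, $A^{*}=(P^{*}TP)^{*}=P^{*}T^{*}P=A$, so $A$ inherits self-adjointness from $T$. Second, for any bounded self-adjoint operator $S$, one has the identity $\|S^{2}\|=\|S\|^{2}$; iterating this gives $\|S^{2^{k}}\|^{1/2^{k}}=\|S\|$, and the Gelfand formula $\rho(S)=\lim_{\ell\to\infty}\|S^{\ell}\|^{1/\ell}$ recorded in the preliminaries then collapses to $\rho(S)=\|S\|$. Applied to both $A$ and $T$, this converts the desired inequality into the operator norm bound $\|A\|\le\|T\|$.

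Submultiplicativity of the operator norm then closes the argument:
\[
    \|A\|=\|P^{*}TP\|\le\|P^{*}\|\cdot\|T\|\cdot\|P\|=\|T\|,
\]
so $\rho(A)=\|A\|\le\|T\|=\rho(T)$, as claimed.

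The main obstacle is conceptual rather than computational: one might be tempted to argue directly from Gelfand's formula by comparing $A^{\ell}=(P^{*}TP)^{\ell}$ with $P^{*}T^{\ell}P$, but the intermediate projections $PP^{*}$ destroy that identity, so a walk-by-walk comparison is unavailable in general. Passing through the operator norm via the self-adjointness assumption is precisely what lets one pull the $P^{*}$ and $P$ off one at a time without losing any factor, and it is the only place where the argument uses that $T$ is not an arbitrary bounded operator.
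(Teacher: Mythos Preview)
The paper does not prove this statement; it is simply recorded as a preliminary Fact without argument. Your proof is correct, and your observation that the literal statement (for a general bounded $T$) is false without self-adjointness is a genuine catch: the standing hypothesis in that subsection is only that $T$ is bounded, and your $2\times 2$ counterexample shows this does not suffice. In the paper's sole application (the corollary for adjacency matrices of graphs) the operators are self-adjoint, so nothing downstream is affected; your route via $\rho(S)=\|S\|$ for self-adjoint $S$ together with submultiplicativity of the operator norm is the standard and cleanest way to justify the claim in that setting.
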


\begin{corollary}   \label{cor:subgraph-less-specrad}
    If $H$ is a subgraph of (possibly infinite) graph $G$, then $\rho(A_H)\le\rho(A_G)$.
\end{corollary}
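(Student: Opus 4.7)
The plan is to reduce this to the preceding fact that $\rho(A) \le \rho(T)$ for any principal submatrix $A$ of a bounded self-adjoint operator $T$, after handling the complication that ``subgraph'' need not mean ``induced subgraph''.

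If $H$ happens to be an induced subgraph of $G$, then $A_H$ is literally the principal submatrix of $A_G$ indexed by $V(H) \subseteq V(G)$, and the preceding fact concludes the argument immediately.  For a general subgraph $H$, I would route through the induced subgraph $G[V(H)]$ and establish $\rho(A_H) \le \rho(A_{G[V(H)]}) \le \rho(A_G)$; the right inequality is the induced case we just disposed of, so the only work is the left inequality.

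To prove the left inequality, I would observe that $0 \le A_H \le A_{G[V(H)]}$ entrywise, since both are $0/1$-valued with $E(H) \subseteq E(G[V(H)])$, and then invoke monotonicity of the spectral radius under entrywise domination of bounded nonnegative operators on $\ell_2(V)$.  To justify that monotonicity, given $0 \le A \le B$ entrywise and $x \in \ell_2(V)$, a pointwise triangle inequality gives $|(Ax)(i)| \le (B|x|)(i)$, whence $\|Ax\|_2 \le \|B|x|\|_2 \le \|B\| \cdot \|x\|_2$ and therefore $\|A\| \le \|B\|$.  Entrywise nonnegativity and domination are preserved by matrix products, so $0 \le A^k \le B^k$ and the same argument gives $\|A^k\| \le \|B^k\|$.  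Taking $k$-th roots and letting $k\to\infty$, the fact that $\rho(T) = \lim_k \|T^k\|^{1/k}$ yields $\rho(A) \le \rho(B)$, which is exactly what we need.

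I do not anticipate a real obstacle: the principal-submatrix fact does essentially all of the work, and the only subtlety is closing the gap between induced and non-induced subgraphs, which the entrywise monotonicity argument handles in a few lines.
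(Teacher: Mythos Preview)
Your proposal is correct. The paper does not give an explicit proof: it simply states this as a corollary of the immediately preceding fact that principal submatrices have spectral radius at most that of the ambient operator. Your argument spells out exactly how to derive it from that fact, and your extra paragraph handling the non-induced case via entrywise monotonicity is a welcome bit of care (in the paper's sole application, $H'$ sits inside the infinite graph $X$ as an induced subgraph, so the principal-submatrix fact already applies directly).
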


\section{Infinite trees hanging from a biregular graph}
Let $H$ be any $(2,d-1)$-biregular graph where the partition with degree-$(d-1)$ vertices is called $U$ and the partition with degree-$2$ vertices is called $V$.  Let $X$ be the infinite graph constructed from $H$ in the following way:
\begin{displayquote}
    At every vertex in $U$, the $(d-1)$-regular partition, glue an infinite tree where the root has degree-$1$ and the remaining vertices have degree-$d$.  At every vertex in $V$, the $2$-regular partition, glue an infinite tree where the root has degree-$(d-2)$ and every other vertex has degree-$d$.
\end{displayquote}
Note that $X$ is a $d$-regular infinite graph.  The main result of this section is:
\begin{lemma}   \label{lem:inf-graph-specrad}
    $\rho(A_X)\le 2\sqrt{d-1}$.
\end{lemma}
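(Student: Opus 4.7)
The plan is to use a Schur complement reduction: for every real $z > 2\sqrt{d-1}$, I will show that $zI - A_X$ is invertible on $\ell_2(V(X))$, which, by self-adjointness of $A_X$, yields $\rho(A_X) \le 2\sqrt{d-1}$.

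Decompose $V(X) = V(H) \sqcup V_T$, where $V_T$ is the union of tree interiors (every tree vertex except the roots, which lie in $V(H)$). Writing $A_T$ for the restriction of $A_X$ to $V_T$ and $B\colon \ell_2(V_T) \to \ell_2(V(H))$ for the bipartite adjacency between $V(H)$ and $V_T$, we have
\[
zI - A_X \;=\; \begin{pmatrix} zI - A_H & -B \\ -B^\top & zI - A_T \end{pmatrix}.
\]
The operator $A_T$ is a block-diagonal direct sum of adjacency matrices of infinite $(d-1)$-ary rooted trees $T^{d-1}$ (one copy at each $x \in U$ and $d-2$ copies at each $x \in V$). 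Since each $T^{d-1}$ embeds into $T_d$, \pref{cor:subgraph-less-specrad} gives $\rho(A_T) \le 2\sqrt{d-1}$, so $(zI - A_T)^{-1}$ is bounded for $z > 2\sqrt{d-1}$, and the Schur complement principle reduces invertibility of $zI - A_X$ to that of
\[
S(z) \;:=\; (zI - A_H) \;-\; B(zI - A_T)^{-1} B^\top
\]
on $\ell_2(V(H))$.

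Because the branches in $V_T$ are mutually disconnected, a direct calculation shows that $B(zI - A_T)^{-1} B^\top$ is a diagonal operator on $\ell_2(V(H))$ with entry $g(z)$ at each $x \in U$ and $(d-2)g(z)$ at each $x \in V$, where $g(z)$ is the root Green's function of $T^{d-1}$, determined by the tree resolvent recursion $(d-1)g(z)^2 - zg(z) + 1 = 0$. At the critical value $z_0 := 2\sqrt{d-1}$ this yields $g(z_0) = 1/\sqrt{d-1}$, so the diagonal of $S(z_0)$ equals $(2d-3)/\sqrt{d-1}$ on $U$ and $d/\sqrt{d-1}$ on $V$.

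The final step is to verify $S(z) \succ 0$ in a right neighborhood of $z_0$. Writing $A_H = \bigl(\begin{smallmatrix} 0 & M \\ M^\top & 0 \end{smallmatrix}\bigr)$ in the bipartite basis and using $\|M\|^2 \le 2(d-1)$ from $(2,d-1)$-biregularity, a weighted Cauchy--Schwarz on the cross term $2\langle y_U, M y_V\rangle$ yields
\[
\langle y, S(z_0) y\rangle \;\ge\; c(d)\,\|y\|^2
\]
for some $c(d) > 0$ whenever $d \ge 3$; the key arithmetic identity producing the positive gap is $d(2d-3) - 2(d-1)^2 = d - 2$. Continuity in $z$ then extends this to $z$ slightly greater than $z_0$, completing the argument. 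The main obstacle is simultaneously justifying (i) that the Schur complement is the claimed diagonal operator---which requires carefully tracking the decoupling of different branches in $(zI - A_T)^{-1}$---and (ii) that the lower bound on $\langle y, S(z_0) y\rangle$ is uniform in $|V(H)|$, which follows from the explicit block structure together with the uniform control $\|A_H\| \le \sqrt{2(d-1)}$ afforded by the biregularity of $H$.
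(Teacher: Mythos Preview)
Your Schur-complement approach is genuinely different from the paper's and, modulo two easily repaired omissions, correct. The paper instead bounds $\rho(B_X)\le\sqrt{d-1}$ by a combinatorial encoding of closed ``linkages'' of nonbacktracking walks (a trace-moment argument in the spirit of Bordenave) and then transfers this to $A_X$ via the infinite Ihara--Bass formula. Your argument eliminates the tree part directly through the resolvent: because the branches decouple in $(zI-A_T)^{-1}$, the Schur complement $S(z)$ is $zI-A_H$ minus an explicit diagonal in the root Green's function $g(z)$, and positivity of $S$ reduces to a $2\times 2$ check using only the biregularity bound $\|A_H\|=\sqrt{2(d-1)}$. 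Your route is shorter and more structural for this specific $X$; the paper's walk-counting is heavier but is the general-purpose tool they want to showcase.

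The two omissions are these. First, showing $zI-A_X$ invertible for every real $z>2\sqrt{d-1}$ only excludes spectrum in $(2\sqrt{d-1},\infty)$; self-adjointness alone does not rule out spectrum below $-2\sqrt{d-1}$. This is immediate once you observe that $X$ is bipartite (the bipartition of $H$ extends consistently into the attached trees), so $\mathsf{spec}(A_X)$ is symmetric about $0$. Second, verifying $S(z)\succ 0$ only ``in a right neighborhood of $z_0$'' does not yet give invertibility for \emph{all} $z>z_0$, which is what your opening sentence promises. The fix is a one-line monotonicity observation: for $z>z_0$ the resolvent $(zI-A_T)^{-1}$ is positive and decreasing in $z$, so $S(z)=(zI-A_H)-B(zI-A_T)^{-1}B^\top$ is increasing in the Loewner order; since your computation gives $\lim_{z\downarrow z_0}S(z)\succeq c(d)I$, it follows that $S(z)\succeq c(d)I$ for every $z>z_0$. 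Equivalently, both diagonal entries $z-g(z)$ and $z-(d-2)g(z)$ are increasing on $(z_0,\infty)$ because $g$ is decreasing there.
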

To prove \pref{lem:inf-graph-specrad}, we instead turn our attention to the nonbacktracking matrix of $X$, called $B_X$.  In particular, we bound $\rho(B_X)$ and then employ the Ihara--Bass formula of \cite{AFH} for infinite graphs to translate the bound on $\rho(B_X)$ into a bound on $\rho(A_X)$.

Thus, we first prove:
\begin{lemma}   \label{lem:nb-matrix-bound}
    $\rho(B_X)\le\sqrt{d-1}$.
\end{lemma}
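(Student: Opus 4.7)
The plan is to bound $\rho(B_X)$ via the Gelfand identity $\rho(B_X) = \lim_{\ell\to\infty}\|B_X^\ell\|^{1/\ell}$ combined with a trace-moment argument on the positive self-adjoint operator $M_\ell := B_X^\ell(B_X^*)^\ell$, for which $\|B_X^\ell\|^2 = \rho(M_\ell)$. Applying Fact \ref{fact:specrad-max-basis} to $M_\ell$ in the directed-edge basis gives
\[
    \rho(M_\ell) \;=\; \sup_e \limsup_{k\to\infty}\langle \delta_e, M_\ell^k\delta_e\rangle^{1/k}.
\]
It therefore suffices to show that for each directed edge $e$, $\langle \delta_e, M_\ell^k\delta_e\rangle \le C(\ell)\,k^{O(1)}\,(d-1)^{\ell k}$ with $\log C(\ell) = o(\ell)$. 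Such a bound yields $\rho(M_\ell) \le C(\ell)(d-1)^\ell$, hence $\|B_X^\ell\| \le C(\ell)^{1/2}(d-1)^{\ell/2}$, and sending $\ell \to \infty$ gives $\rho(B_X) \le \sqrt{d-1}$.

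Combinatorially, $\langle \delta_e, M_\ell^k\delta_e\rangle$ counts tuples $(e_0 = e, e_1, \ldots, e_{k-1}, e_k = e;\, f_1, \ldots, f_k)$ of directed edges together with, for each $i$, a length-$\ell$ nonbacktracking walk from $e_{i-1}$ to $f_i$ and a length-$\ell$ nonbacktracking walk from $e_i$ to $f_i$ in $X$. Assembling these yields a closed ``zigzag'' walk of length $2\ell k$ based at $e$ that is nonbacktracking on each of its $2k$ consecutive length-$\ell$ segments but may backtrack at the $2k-1$ segment junctions. Since each directed edge of the $d$-regular graph $X$ has exactly $d-1$ nonbacktracking continuations, the total number of such unrestricted zigzag walks is $(d-1)^{2\ell k}$; the crux is to show that the closing constraint $e_k = e$ saves a factor of roughly $(d-1)^{\ell k}$.

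To realize this savings, I would exploit the global structure of $X$: outside the finite biregular core $H$, the graph $X$ is a disjoint union of infinite rooted trees in which every non-root vertex has degree $d$. Inside a pure tree, a length-$\ell$ nonbacktracking walk from any edge is uniquely parametrized by a sequence of $\ell$ branching choices in $\{1,\ldots,d-1\}$, and two such walks starting at different edges can share a common endpoint $f_i$ only in very restrictive configurations that I would classify explicitly. Adapting the encoding strategy from Bordenave's proof of Friedman's theorem \cite{Bor}, I would assign each zigzag walk a succinct description consisting of (i) the schedule of entries to and exits from $H$, (ii) the sequence of edges traversed while inside $H$, and (iii) for each tree excursion, the branching choices made. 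The branching choices in (iii) account for the dominant $(d-1)^{\ell k}$ factor once the closing constraint is imposed, while the finiteness of $H$ and the rigidity of tree walks constrain the data in (i) and (ii) to at most polynomial overhead in $\ell$ and $k$.

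The main obstacle will be making this encoding quantitative, and in particular bounding the number of distinct entry/exit schedules and the total sojourn time in $H$ so that the multiplicative overhead remains polynomial in $\ell k$. In Bordenave's random-graph setting \cite{Bor} this is controlled by a ``tangle-free walk'' argument limiting how often a walk revisits short cycles; here, the explicit deterministic tree geometry of $X$ outside $H$ should make the adaptation cleaner, since each step spent in a tree segment is pinned to a unique path once the branching data is recorded and the remaining combinatorial freedom is isolated in the finitely many passes through the core $H$.
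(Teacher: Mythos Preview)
Your skeleton is exactly the paper's: reduce to $\rho(M_\ell)$ via Gelfand, invoke \pref{fact:specrad-max-basis} in the directed-edge basis, interpret $\langle \delta_e,M_\ell^k\delta_e\rangle$ as closed $(2k\times\ell)$-linkages, and bound those by an encoding that separates tree excursions from passes through the core $H$.  Two points, however, need correction before the argument goes through.

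First, the overhead you promise is too small.  Each of the $2k$ nonbacktracking segments enters and leaves $H$ at most once (once a nonbacktracking walk exits $H$ into a tree it cannot return within that segment), so recording the phase lengths alone already costs $(\ell+1)^{O(k)}$, not $\mathrm{poly}(\ell,k)$.  The paper's final count is of the shape $C(\ell)^{k}(d-1)^{\ell k}$ with $C(\ell)=\mathrm{poly}(\ell)$, which still yields $\rho(M_\ell)\le C(\ell)(d-1)^{\ell}$ and hence the desired $\rho(B_X)\le\sqrt{d-1}$ after the $\ell$-limit; but the stronger bound you state is not what one actually proves.

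Second, and more substantively, ``finiteness of $H$'' is not the mechanism that makes item (ii) cheap---the paper's argument is uniform in $|H|$ and would work even for infinite $H$.  The crucial feature is the $(2,d-1)$-biregularity: once inside $H$, a nonbacktracking step leaving a vertex of $V$ (degree $2$ in $H$) has a \emph{unique} continuation within $H$, so among the $\alpha$ in-$H$ steps only the roughly $\alpha/2$ steps leaving $U$ carry a $(d-1)$ factor, giving $(d-1)^{\alpha/2}$.  Combined with the tree part, where the closed-walk constraint forces half of the $\beta$ tree steps to move toward $H$ (hence forced) and only the remaining $\le\beta/2$ ``Phase~3b'' steps cost $(d-1)$, one gets the needed $(d-1)^{(\alpha+\beta)/2}=(d-1)^{\ell k}$.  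If you only invoke finiteness, you would pay up to $(d-1)^{\alpha}$ on the in-$H$ portion and the bound would blow up; identifying this alternation is the missing idea in your plan.
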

We use the following version of the Ihara--Bass formula of \cite{AFH} for infinite graphs.
\begin{theorem} \label{thm:inf-IB}
    Let $G$ be a (possibly infinite) graph.  Then
    \[
        \mathsf{spec}(B_G) = \{\pm 1\}\cup\{\lambda:(D_G-\Id)-\lambda A_G + \lambda^2\Id\text{ is not invertible}\}.
    \]
\end{theorem}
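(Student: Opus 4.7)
The plan is to prove Theorem~\ref{thm:inf-IB} via a $2\times 2$ block operator factorization, adapting the classical Schur-complement proof of the finite-dimensional Ihara--Bass identity to bounded operators on $\ell_2$-spaces. Since the graphs to which this theorem will be applied have bounded degree, every operator that enters is bounded and the usual block $LU$ manipulations apply verbatim.

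First I would set up the boundary operators $S, T : \ell_2(V) \to \ell_2(\vec{E})$ by $(Sf)_e = f_{\mathrm{start}(e)}$ and $(Tf)_e = f_{\mathrm{end}(e)}$, together with the edge-reversal involution $J$ on $\ell_2(\vec{E})$ satisfying $J^2 = \Id$. A direct computation from the definitions yields the algebraic identities
\[
B_G = TS^* - J, \qquad S^*T = A_G, \qquad S^*JT = D_G.
\]
The main construction is the block operator
\[
M(\lambda) \;=\; \begin{pmatrix} \Id + \lambda J & \lambda T \\ S^* & \Id \end{pmatrix}
\]
acting on $\ell_2(\vec{E}) \oplus \ell_2(V)$, which I would decompose by Schur complements in two ways. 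Eliminating from the $(2,2)$-block (trivially possible since it equals $\Id$) shows that $M(\lambda)$ has bounded inverse iff its top-left Schur complement $(\Id + \lambda J) - \lambda T S^* = \Id - \lambda B_G$ does. Eliminating instead from the $(1,1)$-block is possible whenever $\lambda \ne \pm 1$, via $(\Id + \lambda J)^{-1} = (1-\lambda^2)^{-1}(\Id - \lambda J)$; substituting the identities above, the bottom-right Schur complement $\Id - \lambda S^*(\Id + \lambda J)^{-1} T$ simplifies to $(1-\lambda^2)^{-1}\bigl[\Id - \lambda A_G + \lambda^2(D_G - \Id)\bigr]$. Comparing both factorizations gives, for $\lambda \ne \pm 1$, that $\Id - \lambda B_G$ is invertible iff $R(\lambda) := \Id - \lambda A_G + \lambda^2(D_G - \Id)$ is.

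To pass to the statement about $\mathsf{spec}(B_G)$ I reparametrize via $\mu = 1/\lambda$. For $\mu \notin \{0, \pm 1\}$, the operator $\mu\Id - B_G$ is invertible iff $\Id - \mu^{-1}B_G$ is, iff $R(1/\mu)$ is; and since $R(1/\mu) = \mu^{-2}\,Q(\mu)$ with $Q(\mu) = (D_G - \Id) - \mu A_G + \mu^2 \Id$, this is equivalent to invertibility of $Q(\mu)$. This delivers the claimed set equality on $\mathbb{C}\setminus\{0,\pm 1\}$. The exceptional values are handled separately: $\mu = 0$ by directly comparing $B_G$ and $D_G - \Id$ via the leaf/dead-end structure of $G$, and $\mu = \pm 1$ by producing explicit Weyl sequences for $B_G$ (e.g.\ normalized indicators of increasingly long directed nonbacktracking walks along a two-way infinite path or along a diverging family of cycles).

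I expect the main obstacle to be verifying that the formal Schur-complement inversions produce bounded two-sided inverses in the infinite-dimensional setting — in particular, checking that $\Id + \lambda J$ is boundedly invertible precisely when $\lambda \ne \pm 1$, and that the entrywise identities $S^*T = A_G$ and $S^*JT = D_G$ hold as bounded-operator identities rather than merely formal ones. Both follow from the bounded-degree hypothesis, but warrant explicit care. A secondary subtlety is the inclusion of $\mu = \pm 1$ in $\mathsf{spec}(B_G)$ for every $G$ in the generality of \cite{AFH}, which the Weyl-sequence construction must deliver under whatever standing assumptions are in force there (e.g.\ that $G$ contains at least one cycle).
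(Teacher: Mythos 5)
The paper never proves \pref{thm:inf-IB} --- it is quoted verbatim from \cite{AFH} --- so there is no in-paper argument to compare against. Your Schur-complement/block-factorization proof is the standard route to Ihara--Bass in both the finite and infinite settings, and the core is sound: the identities $B_G = TS^* - J$, $S^*T = A_G$, $S^*JT = D_G$ (and $JT = S$, $JS = T$) hold as bounded-operator identities under a bounded-degree hypothesis, $\Id + \lambda J$ is invertible precisely for $\lambda \ne \pm 1$ because $J^2 = \Id$, and eliminating $M(\lambda)$ from either corner gives the equivalence, for $\lambda \notin \{0,\pm 1\}$, of invertibility of $\Id - \lambda B_G$ and of $\Id - \lambda A_G + \lambda^2(D_G - \Id)$. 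The substitution $\mu = 1/\lambda$ and the scalar $\mu^{-2}$ factoring then correctly yield the claimed set equality away from $\{0, \pm 1\}$.

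The two exceptional cases you flag are genuine and you do not close them; one of them is worse than you indicate. For $\mu = \pm 1$: the statement $\{\pm 1\} \subset \mathsf{spec}(B_G)$ is simply false for arbitrary graphs --- for a single edge $B_G = 0$ so $\mathsf{spec}(B_G) = \{0\}$, and even "$G$ contains a cycle" does not suffice, since for $G = K_3$ the operator $B_G$ is a permutation with spectrum the cube roots of unity, which excludes $-1$. So \cite{AFH} must carry hypotheses (ruling out forests and unicyclic graphs, at minimum) that the paper's statement suppresses, and your Weyl-sequence sketch would need to invoke them; as written it is not a proof. For $\mu = 0$: this value is never reached by the factorization, since $\lambda = 1/\mu$ makes sense only for $\mu \ne 0$ and the Schur argument controls $\Id - \lambda B_G$, not $B_G$ itself. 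Your one-line gesture at the "leaf/dead-end structure" is not an argument; you would need either a separate block factorization adapted to $\mu\Id - B_G$ directly, or an explicit construction of a bounded two-sided inverse of $B_G$ when $D_G - \Id$ is boundedly invertible (using, say, $JT = S$ to massage $B_G = TS^* - J$). Neither gap affects the paper's downstream use, which only needs $|\lambda| > \sqrt{d-1} \ge 1$, but both must be closed --- or the hypotheses made explicit --- for the proof of the theorem as stated to be complete.
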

An immediate corollary that we will use is:
\begin{corollary}   \label{cor:nb-bound-to-adj}
    Let $G$ be a $d$-regular graph.  Then $\rho(B_G)\le\sqrt{d-1}$ implies that $\rho(A_G)\le 2\sqrt{d-1}$.
\end{corollary}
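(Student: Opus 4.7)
The plan is to apply \pref{thm:inf-IB} directly, specialized to the $d$-regular setting, and then reduce the corollary to a short triangle-inequality calculation on the two roots of a quadratic. I would first use $d$-regularity, namely $D_G = d\cdot\Id$, to rewrite the operator appearing in \pref{thm:inf-IB} as $((d-1)+\lambda^2)\Id - \lambda A_G$. For any $\lambda \ne 0$, noninvertibility of this operator is equivalent to noninvertibility of $A_G - \frac{(d-1)+\lambda^2}{\lambda}\Id$, which by definition of the spectrum is equivalent to the scalar condition $\lambda + \frac{d-1}{\lambda} \in \mathsf{spec}(A_G)$. Combined with \pref{thm:inf-IB}, this yields the key input: for every $\mu \in \mathsf{spec}(A_G)$, both roots of the quadratic $\lambda^2 - \mu\lambda + (d-1) = 0$ lie in $\mathsf{spec}(B_G)$.

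Given this reduction, the corollary follows from a one-line triangle-inequality bound. For a fixed $\mu \in \mathsf{spec}(A_G)$, let $\lambda_1, \lambda_2$ denote the two roots of the above quadratic. Vieta's formulas give $\lambda_1 + \lambda_2 = \mu$, while the hypothesis $\rho(B_G)\le\sqrt{d-1}$ forces $|\lambda_i|\le\sqrt{d-1}$ for each $i$. The triangle inequality then yields
\[
    |\mu| \le |\lambda_1| + |\lambda_2| \le 2\sqrt{d-1},
\]
and taking the supremum over $\mu \in \mathsf{spec}(A_G)$ completes the argument. (Note that this bound is essentially tight: the product $\lambda_1\lambda_2 = d-1$ together with $|\lambda_i|\le\sqrt{d-1}$ actually forces $|\lambda_1|=|\lambda_2|=\sqrt{d-1}$, which is a consistency check but not needed for the upper bound.)

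There is no substantive obstacle: the real content lives entirely in \pref{thm:inf-IB}, which is being invoked as a black box, and the post-processing is one application of the triangle inequality. The only wrinkle worth naming is the need to exclude $\lambda = 0$ when translating noninvertibility of $((d-1)+\lambda^2)\Id - \lambda A_G$ into a statement about $\mathsf{spec}(A_G)$; but at $\lambda = 0$ the operator reduces to $(d-1)\Id$, which is invertible for $d\ge 2$, so $0$ is never flagged by the condition in \pref{thm:inf-IB} and no separate treatment is required.
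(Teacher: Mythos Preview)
Your proposal is correct and takes essentially the same approach as the paper: both arguments use \pref{thm:inf-IB} in the $d$-regular case to place the roots of $\lambda^2 - \mu\lambda + (d-1)$ in $\mathsf{spec}(B_G)$ whenever $\mu\in\mathsf{spec}(A_G)$. The only cosmetic difference is that the paper argues by contrapositive, exhibiting the explicit root $\lambda = \tfrac{\mu+\sqrt{\mu^2-4(d-1)}}{2} > \sqrt{d-1}$ when $|\mu|>2\sqrt{d-1}$, whereas you proceed directly via Vieta and the triangle inequality; your route has the mild advantage of avoiding any implicit case split on the sign of $\mu$.
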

\begin{proof}
    If there is $\mu$ in $\mathsf{spec}(A_G)$ such that $|\mu|>2\sqrt{d-1}$, then $\mu\Id - A_G$ is not invertible.  Consequently, by \pref{thm:inf-IB} $\lambda = \frac{\mu+\sqrt{\mu^2-4(d-1)}}{2}$, which is greater than $\sqrt{d-1}$, is in $\mathsf{spec}(B_G)$.
\end{proof}

In light of \pref{cor:nb-bound-to-adj}, we see that \pref{lem:nb-matrix-bound} implies \pref{lem:inf-graph-specrad}.

Towards proving \pref{lem:nb-matrix-bound}, we first make a definition.
\begin{definition}
    We call a walk $W$ a $(a\times b)$-linkage if it can be split into $a$ segments, each of which is a length-$b$ nonbacktracking walk.
\end{definition}

\begin{proof}[Proof of \pref{lem:nb-matrix-bound}.]
    The first ingredient in the proof is the fact that for any $\ell \ge 0$,
    \[
        \rho(B_X)^\ell \le \|B_X^\ell\|
    \]
    and thus
    \[
        \rho(B_X) \le \limsup_{\ell\to\infty} \|B_X^{\ell}\|^{1/\ell}
    \]
    Since $\|B_X^\ell\| = \sqrt{\|B_X^\ell(B_X^*)^\ell\|} = \sqrt{\rho(B_X^\ell(B_X^*)^\ell)}$ it suffices to bound $\rho(T)$ where $T := B_X^\ell(B_X^*)^\ell$ is a bounded self-adjoint operator, and hence by \pref{fact:specrad-max-basis}:
    \[
        \rho(T) = \max_{uv\in \vec{E}(X)} \limsup_{k\to\infty}|\langle 1_{uv}, T^{k}1_{uv}\rangle|^{1/k}.
    \]
    The quantity $\langle 1_{uv}, T^k 1_{uv}\rangle$ is bounded by the number of $(2k\times(\ell+1))$-linkages that start and end at vertex $u$, which we can bound via an encoding argument.  In particular, we will give an algorithm to uniquely encode such linkages and bound the total number of possible encodings.

    \paragraph{Encoding linkages.} Each length-$(\ell+1)$ nonbacktracking segment can be broken into $3$ consecutive phases (of which some can possibly be empty): the phase where distance to $H$ decreases on each step (Phase 1), the second phase where distance to $H$ does not change on each step (Phase 2), and the third phase where distance to $H$ increases on each step (Phase 3).  We further break the third phase into two (possibly empty) subphases --- the first subphase where the distance to $u$ decreases on each step (Phase 3a), and the second subphase where the distance to $u$ increases on each step (Phase 3b).

    To encode the linkage, for each length-$(\ell+1)$ nonbacktracking we specify four numbers denoting the lengths of Phases $1$, $2$, $3$a, and $3$b.  Note that Phase $2$ is nonempty only if it is contained in $H$.  For each step $ab$ in Phase $2$ that goes from $U$ (the $(d-1)$-regular partition) to $V$ (the $2$-regular partition) we specify a number $i$ in $[d-1]$ such that $b$ is the $i$th neighbor of $a$ within $H$.  If the first step $ab$ in Phase $2$ is from $V$ to $U$ we specify a number in $[2]$ denoting if $b$ is the first or second neighbor of $a$.  For each step $ab$ in Phase $3$b we specify a number $i$ in $[d-1]$ such that $b$ is the $i$th neighbor of $a$ that does not lie in the path between between $u$ and $H$.

    \paragraph{Recovering linkages from encodings.} We recover a linkage from its encoding ``segment-by-segment''.  Suppose the first $t$ segments have been recovered, we show how to recover the $(t+1)$-th segment.  Let $x$ be the vertex the walk is at after it has traversed the first $t$ segments.  The steps taken in Phase $1$ can be recovered from the length of the Phase since there is a unique path from any vertex to $H$.  The steps in Phase $2$ alternate between stepping from $V$ to $U$ and from $U$ to $V$.  It is easy to recover the first step of Phase $2$ as well as any step from $U$ to $V$; a step $ab$ from $V$ to $U$ that is not the first step of Phase $2$ is uniquely determined by the previous step, since $a$ has $2$ neighbors in $U$ and by the nonbacktracking nature of the walk there is only one choice for $b$.  Note that Phase 3a is nonempty only if $u$ is not in $H$ and all the steps are contained in the same branch as $u$.  Since there is a unique shortest path between the start vertex of Phase 3a and $u$, the steps taken in Phase 3a can be recovered from its length.  Finally, it is easy to recover the steps taken in Phase 3b since they are explicitly given in the encoding.

    \paragraph{Counting encodings.} Now we turn our attention to bounding the total number of encodings.  For given $\alpha,\beta \ge 0$ such that $\alpha+\beta = 2k(\ell+1)$ we first bound the number of walks such that $\alpha$ steps occur in Phase $2$ (i.e. are within $H$) and $\beta$ steps occur outside Phase $2$ (i.e. are outside $H$).  Let $v_1,v_2,\dots, v_{2k(\ell+1)}$ be the sequence of vertices visited by the walk in order.  Since $d(v_1,H) = d(v_{2k(\ell+1)},H)$, $|d(v_i,H)-d(v_{i+1},H)|\le 1$ always and $|d(v_i,H)-d(v_{i+1},H)|=0$ for every step in Phase $2$, the number of steps of the walk that occur in Phase $3$ of their respective segments is at most $\frac{\beta}{2}$.  In particular, the number of steps that occur in Phase $3$b of their respective segments is bounded by $\frac{\beta}{2}$.  The following bounds hold:
    \begin{itemize}
        \item The number of possible encodings of the lengths of phases is bounded by $(\ell+1)^{8k}$.
        \item The number of possible encodings of the first step of Phase $2$ of each segment is bounded by $2^{2k}$.
        \item The number of possible encodings of the list of $U$-to-$V$ steps in Phase $2$ is bounded by $(d-1)^{\frac{\alpha+1}{2}}$ because the steps taken in Phase 2 alternate between going from $V$ to $U$ and from $U$ to $V$.
        \item The number of possible encodings of the list of steps in Phase $3$b is bounded by $k(\ell+1)(d-1)^{\frac{\beta}{2}}$.
    \end{itemize}
    The above bounds combined with the fact that there are at most $2k\ell$ choices for $(\alpha,\beta)$ pairs gives a bound on the number of $(2k\times(\ell+1))$-linkages of
    \[
        2(k(\ell+1))^2(\ell+1)^{8k}2^{2k}(d-1)^{\frac{\alpha+1}{2}}(d-1)^{\frac{\beta}{2}} \le 2(k(\ell+1))^2(\ell+1)^{8k}2^{2k}\sqrt{d-1}^{2k(\ell+1)+1}.
    \]
    Thus,
    \[
        \rho(T) \le \limsup_{k\to\infty} \left(2(k(\ell+1))^2(\ell+1)^{8k}2^{2k}\sqrt{d-1}^{2k(\ell+1)+1}\right)^{1/k} = 4(\ell+1)^8\sqrt{d-1}^{2(\ell+1)}
    \]
    Consequently,
    \[
        \rho(B_X) \le \limsup_{\ell\to\infty} \rho(T)^{1/2\ell} \le \limsup_{\ell\to\infty} \left(4(\ell+1)^8\sqrt{d-1}^{2(\ell+1)}\right)^{1/{2\ell}} = \sqrt{d-1}.
    \]
\end{proof}

\section{High-girth near-Ramanujan graphs with lossy vertex expansion}
We will plant a high girth graph with low spectral radius within a $d$-regular Ramanujan graph. We will show that such a construction is a spectral expander, but has low vertex expansion. By $u\sim_G v$, we mean that $u$ and $v$ are adjacent in the graph $G$. We will write $u\sim v$ when the graph is clear from context.

Consider a $(2,d-1)$ biregular bipartite graph $H=(U,V,E)$, with vertex components $U$ and $V$. $U$ is the degree-$(d-1)$ component and $V$ the degree-$2$ component. Therefore if we define $\gamma:=|U|$, requiring $\gamma$ to be even, then $|V|=(d-1)\gamma/2$. Call the vertices of $U$ and $V$ $\{u_1,\ldots,u_\gamma\}$ and $\{v_1,\ldots, v_{\gamma(d-1)/2}\}$, respectively. We connect $U$ and $V$ in such a way to maximize the girth of $H$.

\begin{lemma}\label{lem:hgirth}
\[
g(H)\geq 2\log_{d-1}\gamma.
\]
\end{lemma}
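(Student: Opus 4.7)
The natural strategy is to realize $H$ as the edge--subdivision of a $(d-1)$-regular graph on $\gamma$ vertices. If $G'$ is such a graph, then subdividing each edge (placing a fresh vertex in the middle of it) produces a bipartite graph whose original vertices retain degree $d-1$ and whose new vertices have degree $2$, with the new vertices in bijection with $E(G')$; since $|E(G')| = \gamma(d-1)/2$, the resulting graph has exactly the prescribed biregular structure.

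The plan is to carry this out in three steps. First, I would invoke the existence of a simple $(d-1)$-regular graph $G'$ on $\gamma$ vertices with girth $g(G') \geq \log_{d-1}\gamma$. This is well below the Moore bound and is classical: one can take, say, a Lubotzky--Phillips--Sarnak Ramanujan graph (which in fact achieves girth $\geq \tfrac{4}{3}\log_{d-1}\gamma$), or obtain it by a probabilistic argument on random $(d-1)$-regular graphs; parity is not an obstacle because $\gamma$ is even, so $\gamma(d-1)$ is even. Second, I would define $H$ as the subdivision of $G'$ and verify that it is a simple $(2,d-1)$-biregular bipartite graph with $|U| = \gamma$ and $|V| = \gamma(d-1)/2$.

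Third, I would argue $g(H) = 2\,g(G')$. Every cycle in $H$ alternates between $U$ and $V$ (since $H$ is bipartite) and, moreover, every time it visits a vertex $v_{ij} \in V$ it must enter along one of the two edges at $v_{ij}$ and exit along the other. Contracting each degree-2 vertex of $V$ along the cycle therefore gives a closed walk in $G'$ whose length is exactly half the length of the original cycle; this walk is in fact a simple cycle (a repeated vertex in $G'$ would force a repeated vertex in $H$). Conversely every cycle of $G'$ lifts to a cycle of $H$ of twice the length. Hence $g(H) = 2\,g(G') \geq 2\log_{d-1}\gamma$.

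The main (and only) substantive ingredient is the existence of the high-girth $(d-1)$-regular graph $G'$; once that is in hand the subdivision calculation is essentially formal. There is no real obstacle here, since the required girth $\log_{d-1}\gamma$ is a factor of roughly two below the Moore bound and is achieved comfortably by existing constructions.
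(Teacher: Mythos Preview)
Your proposal is correct and matches the paper's approach exactly: both realize $H$ as the edge-subdivision of a $(d-1)$-regular graph on $\gamma$ vertices, observe that $g(H)=2g(G')$, and then invoke the existence of such a $G'$ with girth at least $\log_{d-1}\gamma$ (the paper cites Linial--Simkin for this). One small caveat on your citations: LPS graphs have degree $q+1$ for prime $q$, so degree $d-1=p$ is not generally of that form, and uniformly random $(d-1)$-regular graphs typically have $O(1)$ girth rather than $\log_{d-1}\gamma$ --- but the existence claim you need is indeed classical (e.g., Erd\H{o}s--Sachs), so this is only a matter of pointing to the right reference.
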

\begin{proof}
Because of the valency conditions on $H$, there is a graph $\wt H$ on $\gamma$ vertices $\{\wt u_1,\ldots,\wt u_\gamma\}$, where $\wt u_i\sim_{\wt H} \wt u_j$ if and only if $\exists v_k\in H$ such that $u_i\sim_{H} v_k$ and $u_j\sim_H v_k$. Namely, $U$ corresponds to the vertex set of $\wt H$, and $V$ corresponds to the edge set. $\wt H$ is $d-1$ regular, and, as paths in $\wt H$ of length $r$ correspond to paths of length $2r$ in $H$, $g(H)=2g(\wt H)$.  

By a result of Linial and Simkin \cite{LS}, there exists a graph $\wt H$ that has girth at least $c\log_{d-2}\gamma$, for any $c\in (0,1)$, assuming $\gamma$ is even. Therefore by setting $c=\log(d-2)/\log(d-1)$, we have that $g(\wt H)\geq \log_{d-1}\gamma$ and $g(H)\geq 2\log_{d-1}\gamma$.
\end{proof}

We add a new set of vertices $Q=\{q_1,\ldots, q_\gamma\}$ and add a matching between $Q$ and $U$, adding the edge $q_iu_i$ for $1\leq i\leq \gamma$. Similarly, we add another set of vertices $R=\{r_{i,j}\}, 1\leq i \leq \gamma(d-1)/2,1\leq j\leq d-2$. For each $1\leq i\leq \gamma(d-1)/2$, we then add an edge from $v_i$ to each of $r_{i,j}$ for $1\leq j\leq (d-2)$.

We call $H'$ the graph on $U\cup V \cup Q\cup R$. At this point vertices of $U$ and $V$ have degree-$d$, and vertices of $Q$ and $R$ have degree-$1$. Also, note $\Psi(U)=(d+1)/2$. 
We wish to embed $H'$ into a larger, high girth expander, and show that this new graph maintains high girth and expansion, even though the set $U$ is a lossy vertex expander.  Our argument follows that of \cite[Section 5]{Kah}, but instead of embedding individual vertices, we will embed $H'$. 

\begin{figure}
  \[\includegraphics[height=7cm]{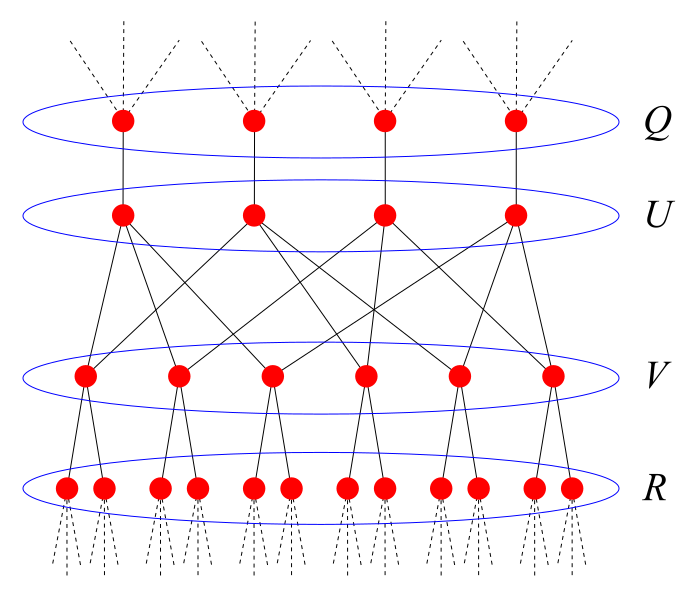}\]
  \caption{$H'$, with labeled components for $d=4$, $\gamma=4$. Note that $\Psi(U)=(d+1)/2$. To create $G'$, we connect $Q$ and $R$ to a well spaced matching in $G$.}
  \label{fig:hprime}
\end{figure}

\begin{theorem}
    For every $d = p+1$ for prime $p\geq 3$, there is an infinite family of $d$-regular graphs $G_m=(V_m,E_m)$ on $m$ vertices, such that $\exists U_m\subset V_m$  with $\Psi(U_m)=(d+1)/2$ for $|U_m|\leq m^{1/3}$, $g(G_m)=(\frac23-o_m(1))\log_{d-1} m$, and such that $\lambda(G_m)\leq 2\sqrt{d-1}+O(1/{\log_{d-1} m})$.
\end{theorem}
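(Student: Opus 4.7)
Take $G$ to be an LPS Ramanujan graph on $n$ vertices of degree $d = p+1$, so $\lambda(G)\leq 2\sqrt{d-1}$ and $g(G)\geq (2/3 - o(1))\log_{d-1} n$. Pick an even $\gamma$ of order $n^{1/3}$ so that (i) $2\log_{d-1}\gamma = (2/3 - o(1))\log_{d-1} n$, (ii) $|V(H')| = 2\gamma + \gamma(d-1)/2 + \gamma(d-1)(d-2)/2 = O(\gamma d^2)$ is a lower-order correction to $n$, and (iii) $\gamma \leq m^{1/3}$ where $m := n + |U| + |V|$. Build $G_m$ by: choosing a matching $M \subset E(G)$ of size $(|Q|+|R|)/2$ whose edges are pairwise at $G$-distance $\geq g(G)/2$; deleting $M$ from $G$ and adjoining $U \cup V$ as new vertices together with the edges of $H$; and, for each $\{a,b\}\in M$, identifying $a$ and $b$ with distinct pendants of $Q \cup R$ (so that the pendant's unique $H'$-edge is inherited by the $G$-vertex). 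A well-separated matching of the required size exists by a greedy argument: balls of radius $g(G)/2$ in $G$ have $O(d^{g(G)/2}) = O(n^{1/3})$ edges, so up to $\Omega(n^{2/3})$ disjoint matching edges can be chosen, vastly more than the $O(\gamma d^2) = O(m^{1/3} d^2)$ we need. By construction, $G_m$ is $d$-regular on $m$ vertices.

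The vertex-expansion and girth conditions are then short verifications. Setting $U_m := U \subset V(G_m)$, its neighbors are $V$ together with the $\gamma$ vertices of $G$ matched to the $q_i$, totalling $(d-1)\gamma/2 + \gamma = (d+1)\gamma/2$ vertices (disjoint from $U_m$), so $\Psi(U_m) = (d+1)/2$ and $|U_m| = \gamma \leq m^{1/3}$. For the girth, any cycle of $G_m$ either lies entirely in the planted $H'$ (length $\geq 2\log_{d-1}\gamma$ by \pref{lem:hgirth}), entirely in $G\setminus M$ (length $\geq g(G)$), or uses at least two spliced edges; in the last case the well-separation of $M$ forces a sub-path in $G\setminus M$ of length $\geq g(G)/2$ between two distinct matching endpoints. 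All three cases yield girth $\geq (2/3 - o(1))\log_{d-1} m$.

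The main obstacle is the spectral bound $\lambda(G_m) \leq 2\sqrt{d-1} + O(1/\log_{d-1} m)$, which I would prove by importing the dispersion-of-eigenvalues argument of \cite[Section 5]{Kah}, with \pref{lem:inf-graph-specrad} providing the new ``local'' spectral input in place of the bare $d$-regular tree. The key geometric fact is that every ball of radius $r := g(G_m)/2 - O(1)$ in $G_m$ embeds isometrically into the infinite graph $X$ of \pref{lem:inf-graph-specrad}: balls centered inside $H'$ see the $H$-with-trees structure directly, and balls centered outside $H'$ look like the $d$-regular infinite tree $T_d$, which is itself a subgraph of $X$ by \pref{cor:subgraph-less-specrad}. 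Now suppose $A_{G_m}$ has an eigenvector $f$ with eigenvalue $|\lambda| \geq 2\sqrt{d-1} + \varepsilon$. Propagating the eigenvalue equation $(A_{G_m} f)(v) = \lambda f(v)$ along locally tree-like neighborhoods of vertices outside $H'$ shows that $|f(v)|$ decays exponentially with $\mathrm{dist}(v, H')$; truncating $f$ to the ball of radius $r$ around $H'$ and lifting into $\ell_2(V(X))$ then produces a nonzero vector $\tilde f$ whose Rayleigh quotient $\langle \tilde f, A_X \tilde f\rangle/\|\tilde f\|^2$ differs from $|\lambda|$ by $O(1/r)$ boundary terms. Since $\rho(A_X) \leq 2\sqrt{d-1}$, this forces $\varepsilon = O(1/r) = O(1/\log_{d-1} m)$. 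The technical heart of this step is the careful bookkeeping of the boundary terms produced by the truncation, which is a routine but delicate adaptation of Kahale's original $K_{2,d}$ argument.
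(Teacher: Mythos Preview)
Your construction, girth, and vertex-expansion verifications are essentially the paper's, modulo a harmless variant (you identify the pendants $Q\cup R$ with $G$-vertices rather than joining them by a $(d-1,1)$-bipartite graph as the paper does) and a numerical slip (the LPS girth is $(4/3-o(1))\log_{d-1}n$, not $(2/3-o(1))$; this makes your ball-size/separation arithmetic inconsistent, but it is easily repaired by taking separation $\approx \log_{d-1}n-\log_{d-1}\gamma$ as in \pref{lem:spacing}).

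The spectral argument has a genuine gap. The assertion that propagating the eigenvalue equation along locally tree-like neighborhoods forces $|f(v)|$ to decay exponentially with $\mathrm{dist}(v,H')$ is false: on a finite tree-like region the recursion $\lambda f(v)=\sum_{w\sim v}f(w)$ admits both a growing and a decaying fundamental solution, and purely local information cannot select the decaying one. In fact the actual content of \cite[Section~5]{Kah} that you invoke, stated here as \pref{lem:kahdisp}, proves the \emph{opposite} monotonicity: for any eigenvector with $|\lambda|\ge 2\sqrt{d-1}$, the $\ell^2$-mass on the shell $X_h$ at distance $h$ from $U\cup V$ is non-decreasing in $h$ (against a test vector with constant shell mass). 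Hence if you truncate $f$ at radius $r$ and lift into $X$, the boundary shell $X_r$ carries the \emph{largest} mass among the shells inside the ball, and the Rayleigh-quotient error is not $O(1/r)\|\tilde f\|^2$. Note too that your lifting argument never uses that the host $G$ is Ramanujan; were it valid, embedding $H'$ into \emph{any} high-girth $d$-regular host would yield a near-Ramanujan graph, which is false.

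What the paper does instead is cut at the other end. It decomposes $g^*A_{G_m}g$ into the $A_G$-quadratic form on $g_G$, the $A_{H'}$-quadratic form on $g_{H'}$, and removed-matching plus interface terms. The $G$-piece is bounded by $2\sqrt{d-1}\|g_G\|^2$ (plus a negligible trivial-eigenvector correction) using the Ramanujan property of $G$; the $H'$-piece by $2\sqrt{d-1}\|g_{H'}\|^2$ via \pref{lem:inf-graph-specrad} and \pref{cor:subgraph-less-specrad}; and the interface terms live on the \emph{inner} shells $X_1=Q\cup R$ and $X_2=M$, which by the monotonicity of \pref{lem:kahdisp} each carry mass at most $O(\|g\|^2/r)$. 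That is what produces the $O(1/\log_{d-1}m)$ bound.
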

\begin{proof}
By the result of Lubotzky, Phillips and Sarnak \cite{LPS}, for such $d$, there exists an infinite family of $d$-regular graphs, where graphs of $n$ vertices have girth $(\frac43-o_n(1)) \log_{d-1}n$ and have spectral expansion  $\leq 2\sqrt{d-1}$.

For a given graph $G=(V,E)$ of this type of size $n$, we attach $H'$ by removing a matching $M\subset E$, $M=\{(a_{1,1},a_{1,2}),\ldots, (a_{k,1},a_{k,2})\}$ for
\begin{equation}\label{eq:kbound}
    k:=\gamma(d-1)(2+(d-1)(d-2))/4.
\end{equation}

We take a matching such that the pairwise distance between edges in the matching is maximized in $G$.

\begin{lemma}\label{lem:spacing}
In a $d$-regular graph on $n$ vertices, there exists a matching $M$ of size $k$ such that for every pair of edges $(a_{i_1,1},a_{i_1,2}),(a_{i_2,1},a_{i_2,2})\in M$, $i_1\neq i_2$,
\[
d((a_{i_1,1},a_{i_1,2}),(a_{i_2,1},a_{i_2,2}))\geq \log_{d-1} n-\log_{d-1}\gamma-O_n(1).
\]
\end{lemma}
\begin{proof}
For a given pair of adjacent vertices $(a_{i,1},a_{i,2})$, as our graph is $d$ regular, there are at most $1+d\frac{(d-1)^r-1}{d-2}$ vertices at distance at most $r$ from $a_{i,1}$, and at most $(d-1)^r$ vertices at distance $r$ from $a_{i,2}$ and distance $r+1$ from $a_{i,1}$. Therefore for any $d\geq 4$, the number of edges at distance at most $r$ from a given edge is less than $4(d-1)^r$. We then greedily add edges by choosing an arbitrary edge with vertices at distance at least $r$ away from all already chosen edges. A $k$th such edge will exist as long as $4k(d-1)^r\leq n$. For our $k$ given in (\ref{eq:kbound}) we can set $r=\log_{d-1} n-\log_{d-1}\gamma-O_n(1)$.  
\end{proof}

To connect $H'$ to $G$, we first delete the matching $M$. Then for every vertex of $Q$ and $R$, we add $d-1$ edges to the set of vertices of $M$, connecting to each vertex of $M$ exactly once. Namely, the induced subgraph on $(Q\cup R)\cup M$ is a $(d-1,1)$ biregular bipartite graph. Call $G'=(V',E')$ the new graph formed from $G$ and $H'$.

We wish to show that $G'$ remains high girth and a good spectral expander. For the girth of $G'$, cycles are either completely contained in $H'$, completely contained in $G$, or a mix between the two. Cycles in $H'$ have length at least $2\log_{d-1}\gamma$ by \pref{lem:hgirth}. Cycles in $G$ have length at least $(\frac43-o_n(1))\log_{d-1}n$ by the construction of \cite{LPS}. For cycles that are a mix of $H'$ and $G$, we must go from one vertex of $H'$ to another vertex of $H'$ through $G$. Therefore by \pref{lem:spacing}, the length of such a cycle is at least $\log_{d-1}n-\log_{d-1}\gamma-O_n(1)$, giving \[g(G')\geq \min\{2\log_{d-1}\gamma,\log_{d-1}n-\log_{d-1}\gamma-O_n(1)\}.\]

To show that the spectrum is not adversely affected, we follow the argument of \cite[Theorem 5.2]{Kah}, with some adjustments. For our new graph, assume that there is an eigenvector $g\perp\bf1$ corresponding to an eigenvalue $|\mu|>2\sqrt{d-1}$.

Call $A$ the adjacency matrix of $G'$, and $A_G$ the adjacency matrix of $G$. Then we have
\[
g^*Ag=g_G^*A_Gg_G+g_{H'}^*Ag_{H'}-2\sum_{i=1}^k g(a_{i,1})g(a_{i,2})+\sum_{\substack{u\in Q\cup R\\a_{i,j}\in M\\u\sim a_{i,j}}} g(u) g(a_{i,j})
\]
where $g_G$ and $g_{H'}$ are the projections of $g$ onto $G$ and $H'$, respectively.

We know that 
\[|g_G^*A_Gg_G|\leq 2\sqrt{d-1}||g_G||^2+\frac{d}{n}\left(\sum_{u\in G} g(u)\right)^2\]
by decomposing $g$ into parts parallel and perpendicular to the all ones vector.

By a combination of \pref{lem:inf-graph-specrad} and \pref{cor:subgraph-less-specrad}, the spectral radius of $H'$ is $2\sqrt{d-1}$, and therefore we have
\[
    |g_G^*A_Gg_G|+|g_{H'}^*A g_{H'}|\leq 2\sqrt{d-1}||g||^2+\frac{d}{n}\left(\sum_{u\in H'} g(u)\right)^2
\]
as $\sum_G g(u)=-\sum_{H'}g(u)$, considering $g\perp \textbf 1$. 

To show that $|\mu|=2\sqrt{d-1}+O(1/\log n)$, we then need to show
\begin{align}
    \frac{1}{\|g\|^2}\left(\frac{d}{n}\left(\sum_{H'} g_{H'}(u)\right)^2-2\sum_{i=1}^k g(a_{i,1})g(a_{i,2})+\sum_{\substack{u\in Q\cup R\\a_{i,j}\in M\\u\sim a_{i,j}}} g(u)g(a_{i,j})\right) = O\left(\frac{1}{\log n}\right). \label{eq:small-interface}
\end{align}

The first term of \pref{eq:small-interface} can be bounded as 
\begin{equation}
\frac dn\left(\sum_{H'} g_{H'}(u)\right)^2\leq \frac{d}{n}\left|H'\right|\|g_{H'}\|^2\leq \frac{\gamma(2+(d-1)(d-2))d}{2n}\|g_{H'}\|^2. \label{eq:first-term-above}
\end{equation}

The second term we can bound as
\begin{equation}
\left|2\sum_{i=1}^k g(a_{i,1})g(a_{i,2})\right|\leq \sum_{a_{i,j}\in M} g(a_{i,j})^2.    \label{eq:second-term-above}
\end{equation}

Now we will bound the last term of \pref{eq:small-interface} using Cauchy Schwarz.
\begin{eqnarray}
\left|\sum_{\substack{u\in Q\cup R\\a_{i,j}\in M\\u\sim a_{i,j}}} g(u)g(a_{i,j})\right|
\leq \sqrt{(d-1)\sum_{u\in Q\cup R} g(u)^2}\sqrt{\sum_{a_{i,j}\in M} g(a_{i,j})^2}. \label{eq:last-term-above}
\end{eqnarray}

We use the following lemma to bound the right hand sides of \pref{eq:second-term-above} and \pref{eq:last-term-above}. The lemma is a generalized version of \cite[Lemma 5.1]{Kah}. The result follows from the same proof, which we reproduce in the appendix for completeness. Here, for two vectors $a,b\in \R^n$, $a\leq b$ if $\forall i\in[n], a(i)\leq b(i)$.

\begin{lemma}[Lemma 5.1 of \cite{Kah}] \label{lem:kahdisp}
Consider a graph on a vertex set $W$, a subset $X$ of $W$, a positive integer $h$, and $s\in L^2(W)$. Let $X_i$ be the set of nodes at distance $i$ from $X$. Assume the following conditions hold:
\begin{enumerate}[(1)]
    \item
    For $h-1\leq i,j\leq h$, all nodes in $X_i$ have the same number of neighbors in $X_j$.
    \item
    If $u\in X_{h-1}$ and $v\in X_{h}$ and $u\sim v$, then  $s(u)/s(v)$ does not depend on the choices of $u$ and $v$. 
    \item
    $s$ is nonnegative and $As\leq \mu s$ on $\textnormal{Ball}_{h-1}(X)$, where $\mu$ is a positive real number.
\end{enumerate}

Then for any $g\in L^2(W)$ such that $|Ag(u)|=\mu|g(u)|$ for $u\in \textnormal{Ball}_{h-1}(X)$, we have
\begin{equation}\label{eq:kahlemma}
\frac{\sum_{v\in X_h} g(v)^2}{\sum_{v\in X_h}s(v)^2}\geq \frac{\sum_{v\in X_{h-1}} g(v)^2}{\sum_{v\in X_{h-1}}s(v)^2}.
\end{equation}
\end{lemma}

To use the lemma, we set $X_0=U\cup V$, and $h$ will vary from $2\leq h\leq \lfloor r/2\rfloor$. Assuming that the girth of $G'$ is at least $r$, the $\lfloor r/2\rfloor$ neighborhoods of each vertex do not overlap. 

Our test vector decays exponentially, with a small adjustment.

\[
s(y)=\left\{
\begin{array}{cc}
     \frac{1}{(d-1)^{h/2}}&X_{h,U}  \\
     \frac{2}{\sqrt{d-1}}-\frac{1}{(d-1)^{3/2}}& y\in  X_{0,V}\\
     \left(\frac{2}{d-2}-\frac{2}{(d-1)(d-2)}\right)\frac{1}{(d-1)^{(h-1)/2}}&y\in X_{h,V}, h\geq 1.
\end{array}\right.
\]
For this assignment of values we have $As\leq (2\sqrt{d-1})s$. In fact, this inequality is sharp at all coordinates except for $y\in X_{1,V}$.

For this $s$, we have that
$
\sum_{y\in X_h} s(y)^2
$
is constant for $h=1,\ldots, \lfloor r/2\rfloor$. Also, recall $Q\cup R=X_1$ and $M=X_2$. By \pref{lem:kahdisp}, as $g$ corresponds to an eigenvalue $|\mu|>2\sqrt{d-1}$, the mass on each of first 2 layers of $X$ can only be at most $2/(r-2)$ of the total mass. 

Combining \pref{eq:first-term-above}, \pref{eq:second-term-above}, and \pref{eq:last-term-above}, we can bound \pref{eq:small-interface} as
\begin{eqnarray*}
\pref{eq:small-interface}
&\leq& \frac{\gamma(2+(d-1)(d-2))d}{2n}\|g_{H'}\|^2+\sum_{a\in X_2} g(a)^2+\sqrt{(d-1)\sum_{u\in X_1} g(u)^2}\sqrt{\sum_{a\in X_2} g(a)^2}\\
&\leq& \left(\frac{\gamma(2+(d-1)(d-2))d}{2n}+(1+\sqrt{d-1})\frac{2}{r-2}\right)\|g\|^2.
\end{eqnarray*}
If we set $\gamma=n^{1/3}$ and $r=\frac{2}3\log_{d-1} n-O_n(1)$, for fixed $d$ this becomes 
\[
O\left(\frac{1}{\log n}\right)\|g\|^2,
\]
meaning that $\mu\leq 2\sqrt{d-1}+O(1/\log n)$. This also gives the desired bounds on vertex expansion and girth, by setting $U=U_m$. Because $|V'|=(1+o_n(1))n$, the bounds on $\Psi(U_m)$, $g(G')$ and $\lambda(G')$ given in terms of $n$ do not change when they are given in terms of $m$.
\end{proof}

\section{Lossless expansion of small sets}
In this section, we prove that sufficiently small sets in a high-girth spectral expander expand losslessly.
\begin{theorem}
    Let $G$ be a $d$-regular graph on $n$ vertices with girth at least $2\alpha\log_{d-1} n + 4$.  Then for any set $S$ with $n^{\kappa}$ vertices,
    \[
        \frac{|\partial S|}{|S|} \ge d - \lambda(G) - \frac{d^{\kappa/\alpha}}{2} - \frac{d}{n^{1-\kappa}}.
    \]
\end{theorem}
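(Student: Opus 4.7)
The plan is to decompose the expansion deficit into two contributions and bound each with a different tool --- the expander mixing lemma for internal edges of $S$, the irregular Moore bound for the external ``multi-neighbor'' overcount.

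First I would write the identity $d|S| = 2e(S) + |\partial S| + M$, where $e(S)$ is the number of edges of $G$ inside $S$ and
\[
    M := \sum_{v\in\partial S}\bigl(|N_G(v)\cap S|-1\bigr)
\]
measures how badly $S$-vertices share external neighbors. Rearranging gives
\[
    d - \frac{|\partial S|}{|S|} \;=\; \frac{2e(S)}{|S|} + \frac{M}{|S|}.
\]
The expander mixing lemma (\pref{lem:expander-mixing-lemma}) with $T=S$ bounds $2e(S)\le d|S|^2/n + \lambda(G)|S|$, giving $2e(S)/|S|\le \lambda(G) + d/n^{1-\kappa}$. This accounts for two of the three subtracted terms in the theorem.

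It remains to show $M/|S|\le d^{\kappa/\alpha}/2$. Following the technical overview in \S1.2, I would construct an auxiliary ``collision'' graph $H$ encoding the pairs $(u,u')\in\binom{S}{2}$ that share a common external neighbor: for every $v\in\partial S$ and every pair of $v$'s neighbors inside $S$, place the corresponding edge in $H$. The high-girth hypothesis on $G$ forces $H$ to be simple (no two $S$-vertices share more than one external neighbor when $g(G)\ge 5$) and well-behaved in girth: a cycle of length $k$ in $H$ lifts, through the witness vertices, to a nonbacktracking closed walk $u_1v_1u_2v_2\cdots u_kv_ku_1$ of length $2k$ in $G$, which contains a cycle, so $g(H)\ge g(G)/2 \ge \alpha\log_{d-1}n+2$. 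A convexity argument ($\binom{d_v}{2}\ge d_v-1$ whenever $d_v\ge 2$) yields $|E(H)|\ge M$, hence $\bar d_H := 2|E(H)|/|S|\ge 2M/|S|$.

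The finish is to apply the irregular Moore bound (\pref{lem:irreg-moore-bound}) to $H$: $g(H)\le 2\log_{\bar d_H-1}|S|+2$. Inverting with $|S|=n^\kappa$ and the girth lower bound gives a ceiling on $\bar d_H$ and therefore on $M/|S|$. The main obstacle I anticipate is squeezing the exponent down to $\kappa/\alpha$ (rather than $2\kappa/\alpha$) and extracting the factor of $\tfrac12$ in $d^{\kappa/\alpha}/2$. I would handle this with a bipartite variant of $H$ on vertex set $S\cup N$ for $N := \{v\in\partial S : |N_G(v)\cap S|\ge 2\}$: since this $H$ is literally a subgraph of $G$, $g(H)\ge g(G)$ rather than $g(G)/2$, halving the exponent when Moore is applied. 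Controlling the partition sizes requires the inequality $|N|\le M$ (immediate from $d_v\ge 2$ on $N$) together with a self-referential rearrangement, which gives the precise constant $\tfrac12$ claimed.
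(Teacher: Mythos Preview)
Your core decomposition $d|S|=2e(S)+|\partial S|+M$ and the plan to bound $2e(S)/|S|$ via the expander mixing lemma and $M/|S|$ via the irregular Moore bound on an auxiliary graph on $S$ is exactly what the paper does. The paper packages it slightly differently: it builds a single graph $H_S$ on $S$ whose edge set is the induced edges \emph{together with}, for each $v\in\partial S$, a \emph{spanning tree} on $N_G(v)\cap S$ (not the clique). Then $|E(H_S)|=e_S+M=d|S|-e_S-|\partial S|$; EML upper-bounds $e_S$, so the average degree of $H_S$ is at least $2\bigl(d-\lambda(G)-d|S|/n-|\partial S|/|S|\bigr)$, and Moore combined with $g(H_S)\ge g(G)/2$ finishes.

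Two points where your write-up departs from this.  First, your clique version of the collision graph has a girth problem: a triangle $u_1u_2u_3$ inside a single $K_{d_v}$ lifts to the closed walk $u_1vu_2vu_3vu_1$ in $G$, which reduces to the empty walk after cancelling backtracks, so it does \emph{not} force a short cycle in $G$ and your claimed $g(H)\ge g(G)/2$ fails as stated. Spanning trees, as the paper uses, avoid this because they contribute no internal cycles.

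Second, on the exponent: the paper's own proof concludes with $\tfrac12(d^{2\kappa/\alpha}-1)$, not $\tfrac12 d^{\kappa/\alpha}$, and this is consistent with the threshold $\kappa<C/4$ in \pref{thm:main-positive}. The $\kappa/\alpha$ in the displayed statement is a typo. So the ``obstacle'' you anticipate is not real, and your first approach already matches the paper. The bipartite detour is both unnecessary and not obviously sound: with $|V(H)|=|S|+|N|$ and $|E(H)|=M+|N|$, Moore gives only $2(M+|N|)/(|S|+|N|)-1\lesssim (d-1)^{\kappa/\alpha}$, and feeding in $|N|\le M$ yields at best $M/|S|\le \bigl((d-1)^{\kappa/\alpha}+1\bigr)\big/\bigl(3-(d-1)^{\kappa/\alpha}\bigr)$, which neither equals $\tfrac12 d^{\kappa/\alpha}$ nor is valid once $(d-1)^{\kappa/\alpha}\ge 3$.
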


\begin{proof}
    Let $S$ be a set of vertices of size $n^{\kappa}$ in $G$.  Let $e_S$ denote the number of internal edges within $S$.  Let $n_i$ denote the number of vertices in $\partial S$ that have $i$ edges from $S$ incident to it.  Then:
    $|\partial S| = n_1 + n_2 + \dots + n_d$ and $|E(S,\partial S)| = n_1 + 2n_2 + \dots + dn_d$.  Note that $|E(S,\partial S)|$ is also equal to $d|S|-2e_S$.  Now consider the graph $H_S$ on vertex set $S$ and edge set given by induced edges on $S$ along with new edges introduced by adding an arbitrary spanning tree for every set of $i$ vertices that are neighbors of a vertex in $\partial S$ with exactly $i$ neighbors in $S$.
    The number of edges in $H(S)$ is equal to
    \[
        e_S + n_2 + 2n_3 + \dots + (d-1)n_d = e_S + |E(S,\partial{S})| - |\partial S| = d|S| - e_S - |\partial S|
    \]
    and $g(H(S))\ge \frac{1}{2}g(G)\ge\alpha\log_{d-1} n + 2$.  As a consequence of the expander mixing lemma (\pref{lem:expander-mixing-lemma}),
    \[
        e_S \le \left(\lambda(G) + \frac{d|S|}{n}\right)|S|
    \]
    for some absolute constant $c$. Consequently,
    \[
        |E(H(S))| \ge \left(d-\lambda(G)-\frac{d|S|}{n}\right)|S| - |\partial S|,
    \]
    which means the average degree is lower bounded by
    \[
        2\left(d-\lambda(G)-\frac{d|S|}{n}-\frac{|\partial S|}{|S|}\right).
    \]    
    Thus by the irregular Moore bound (\pref{lem:irreg-moore-bound}),
    \[
        g(H(S))\le\frac{2\log n^{\kappa}}{\log \left(2\left(d-\lambda(G)-\frac{d|S|}{n}-\frac{|\partial S|}{|S|}\right)-1\right)} + 2
    \]
    and hence
    \[
        \frac{\alpha}{\log (d-1)} \le \frac{2\kappa}{\log \left(2\left(d-\lambda(G)-\frac{d|S|}{n}-\frac{|\partial S|}{|S|}\right)-1\right)}.
    \]
    This implies
    \[
        d-\lambda(G)-\frac{d|S|}{n}-\frac{|\partial S|}{|S|} - \frac{1}{2} \le \frac{d^{2\kappa/\alpha}}{2},
    \]
    and finally by rearranging the above and plugging in $|S| = n^{\kappa}$
    \[
        \frac{|\partial S|}{|S|} \ge d - \lambda(G) - \frac{d^{2\kappa/\alpha}-1}{2}  - \frac{d}{n^{1-\kappa}}.
    \]
\end{proof}

\begin{remark}
    If $G$ is a $n$-vertex $d$-regular Ramanujan graph with girth $\frac{4}{3}\log_{d-1}n$ (which is a condition satisfied by the Ramanujan graphs of \cite{LPS}) then for every set $S$ of size $n^{\kappa}$ for $\kappa<1/3$,
    \[
        \frac{|\partial S|}{|S|} \ge d(1 - o_d(1)).
    \]
\end{remark}

\section*{Acknowledgements}
We would like to thank Shirshendu Ganguly and Nikhil Srivastava for their highly valuable insights, intuition, and comments.

\bibliographystyle{alpha}
\bibliography{ref}
\appendix
\section{Proof of \pref{lem:kahdisp}}

\begin{proof}
Let $A$ be the adjacency matrix of $W$. Let $P_{h-1}$ and $P_h(X)$  be the orthogonal projections onto $X_{h-1}$ and onto $X_{h}$, respectively. Let $P_{\leq h-1}$ and $P_{\leq h}(X)$ be the orthogonal projections onto $\textnormal{Ball}_{h-1}(X)$ and $\textnormal{Ball}_{h}(X)$, respectively. 
We need to show that

\[
\frac{\|P_{h}g\|^2}{\|P_{h}s\|^2}\geq\frac{\|P_{h-1}g\|^2}{\|P_{h-1}s\|^2}.
\]
Call $A_h=P_{\leq h}AP_{\leq h}$ (so $A_h$ performs the adjacency operator on $\textnormal{Ball}_{h}(X)$). By the conditions of the lemma, we know that there are constants $\alpha,\beta$ and $\gamma$ such that
\begin{equation}\label{eq:kahassume3}
P_hA_hs=\gamma P_hs
\end{equation}
and 
\begin{equation}\label{eq:kahassume2}
    A_hP_hs=\alpha P_hs+\beta P_{h-1}s.
\end{equation}

By assumption, 
\begin{equation}\label{eq:kahassume}
A_h s\leq \mu P_{\leq h-1}s+\gamma P_h s.
\end{equation}
Therefore by applying $P_{\leq h-1}$ to both sides of \pref{eq:kahassume}, 
\begin{eqnarray*}
P_{\leq h-1}A_hs&\leq& \mu P_{\leq h-1}s\\
&\leq&\mu P_{\leq h}s-\mu P_h s.
\end{eqnarray*}
Now we apply $A_h$ to both sides:
\begin{align*}
A_hP_{\leq h-1}A_hs&\leq \mu A_hs-\mu A_hP_{h}s\\
&\leq\mu A_hs-\mu(\alpha P_hs+\beta P_{h-1}s) & \textnormal{by }\pref{eq:kahassume2}\\
&\leq\left(\mu^2 P_{\leq h-1}+\mu(\gamma-\alpha)P_{h}-\mu\beta P_{h-1}\right)s.& \textnormal{by }\pref{eq:kahassume}
\end{align*}

Define the matrix $B:=\mu^2 P_{\leq h-1}+\mu(\gamma-\alpha)P_{h}-\mu\beta P_{h-1}-A_hP_{h-1}A_h$. $B$ has no positive entries on the off diagonal.  Take any eigenvector $\psi$ of $B$. Without loss of generality assume that $\psi$ has a positive entry. Then take $i=\argmax_u\psi(u)/s(u)$. As $\psi\leq (\psi(i)/s(i))s$, $(B\psi)(i)\geq B(\psi(i)/s(i))s)(i)$. The quantity on the right is nonnegative, meaning that the eigenvalue with eigenvector $\psi$ is nonnegative. As $\psi$ was arbitrary, $B$ is positive semidefinite. 

Because $B$ is positive semidefinite,
\begin{equation}\label{eq:quadform}
    g^*A_hP_{\leq h-1}A_hg\leq g^*\left(\mu^2 P_{\leq h-1}+\mu(\gamma-\alpha)P_{h}-\mu\beta P_{h-1}\right)g.
\end{equation}

 For any orthogonal projection $P$, $P^2=P$. Therefore $g^*A_hP_{\leq h-1}A_hg=\|P_{\leq h-1}A_hg\|^2$. Moreover \pref{eq:quadform} becomes
\[
\|P_{\leq h-1}A_hg\|^2\leq \mu^2\|P_{\leq h-1}g\|^2+\mu(\gamma-\alpha)\|P_hg\|^2-\mu\beta\|P_{h-1}g\|^2.
\]
By assumption, $\|P_{\leq h}A_hg\|=\mu\|P_{\leq h}g\|$. Therefore 
\begin{equation}\label{eq:simpquadform}
    (\gamma-\alpha)\|P_hg\|^2\geq \beta\|P_{h-1}g^2\|.
\end{equation}

Moreover, as $A_h$ and $P_h$ are self adjoint, $s^*A_hP_hs=s^*P_hA_hs$, so $\alpha\|P_hs\|^2+\beta\|P_{h-1}\|^2=\gamma\|P_hs\|$. Combining this with \pref{eq:simpquadform}, we obtain \pref{eq:kahlemma}.
\end{proof}

\end{document}